\DeclareMathOperator*{\argmin}{arg\,min}
\theoremstyle{remark}
\newtheorem{theorem}{Theorem}
\newtheorem{lemma}{Lemma}
\newtheorem{definition}{Definition}
\newtheorem{proposition}{Propostion}
\begin{document}
\title{Robust AC Optimal Power Flow \\ with Robust Convex Restriction}
\author{Dongchan Lee, Konstantin Turitsyn, Daniel K. Molzahn, and Line A. Roald
\thanks{
This work was supported in part by the U.S. Department of Energy Office of Electricity as part of the DOE Grid Modernization Initiative and in part by the National Science Foundation Energy, Power, Control and Networks Award 1809314.

D. Lee is with the Department of Mechanical Engineering, Massachusetts Institute of Technology, Cambridge, MA 02139, USA (email: dclee@mit.edu).

K. Turitsyn is with the D. E. Shaw Group, New York, NY 10036 USA (e-mail: turitsyn@mit.edu).

D. K. Molzahn is with the School of Electrical and Computer Engineering, Georgia Institute of Technology, Atlanta, GA 30313 USA (e-mail: molzahn@gatech.edu).

L. A. Roald is with the Department of Electrical and Computer Engineering, University of Wisconsin, Madison, WI 53706 USA (e-mail: roald@wisc.edu).
}
}


\maketitle

\begin{abstract}
Electric power grids regularly experience uncertain fluctuations from load demands and renewables, which poses a risk of violating operational limits designed to safeguard the system.
In this paper, we consider the robust AC OPF problem that minimizes the generation cost while requiring a certain level of system security in the presence of uncertainty.
The robust AC OPF problem requires that the system satisfy operational limits for all uncertainty realizations within a specified uncertainty set. 
Guaranteeing robustness is particularly challenging due to the non-convex, nonlinear AC power flow equations, which may not always have a solution. 
In this work, we extend a previously developed convex restriction to a \emph{robust convex restriction}, which is a convex inner approximation of the non-convex feasible region of the AC OPF problem that accounts for uncertainty in the power injections.
We then use the robust convex restriction in an algorithm that obtains robust solutions to AC OPF problems by solving a sequence of convex optimization problems. We demonstrate our algorithm and its ability to control robustness versus operating cost trade-offs using PGLib test cases.
\end{abstract}
\begin{IEEEkeywords}
Robust optimal power flow, convex restriction
\end{IEEEkeywords}

\IEEEpeerreviewmaketitle

\section{Introduction}
The optimal power flow (OPF) problem determines the minimum cost dispatch that satisfies the power flow equations and operational constraints such as voltage magnitude, line flow, and generator limits~\cite{ferc4}. 
OPF problems use the AC power flow equations to model the flow of power and balance supply and demand forecasts, which enter as parameters in the optimization problem. Increasing penetrations of stochastic renewable generation, often observed as large variations in the demand due to behind-the-meter solar PV, lead to the actual operating conditions differing from the forecast considered in the OPF problem.
Forecast errors can result in unacceptable violations of operational limits and the potential loss of steady-state stability (i.e., a condition where the system attempts to reach an operating point for which the AC power flow equations do not admit any solutions). An AC representation of the power flow equations is needed to model these effects accurately. 

\subsection{Literature Review}
\subsubsection{Robust AC Optimal Power Flow} Common approaches for considering uncertainties in OPF problems include \emph{stochastic} \cite{Phan2014}, \emph{chance-constrained} \cite{roald2013, bienstock2014chance, venzke2017, roald2018cc, muhlpfordt2019chance, marley2016}, and \emph{robust} \cite{Jabr, capitanescu2012,molzahn_roald-acopf_robust2018,molzahn2019hicss, Bai2016, lorca2017robust, amjady2017, louca2018, Chamanbaz} formulations. 
Robust OPF formulations, which are considered in this paper, prohibit constraint violations for all uncertainty realizations within a deterministic uncertainty set~\cite{BenTal2009}. 
Fig.~\ref{fig:intro} illustrates the difference between a nominal OPF solution and a robust OPF solution for a 9-bus system. The feasible region (in blue) has two ``holes'' induced by two voltage magnitude constraints. We observe that the nominal solution (in purple) is at the voltage limit, making it vulnerable to changes in the parameters.
The robust solution (red dot) requires that any perturbation inside a \emph{confidence ellipsoid} remains feasible with respect to the operational constraints.
The confidence ellipsoid represents a set of plausible perturbations that captures the uncertain variable with a certain probability (e.g., with 95\% confidence)~\cite{BenTal2009}. This definition establishes a natural link to chance-constrained formulations, which require that constraint violation probabilities are less than a specified threshold.

\begin{figure}[!t]
	\centering
	\includegraphics[width=3in]{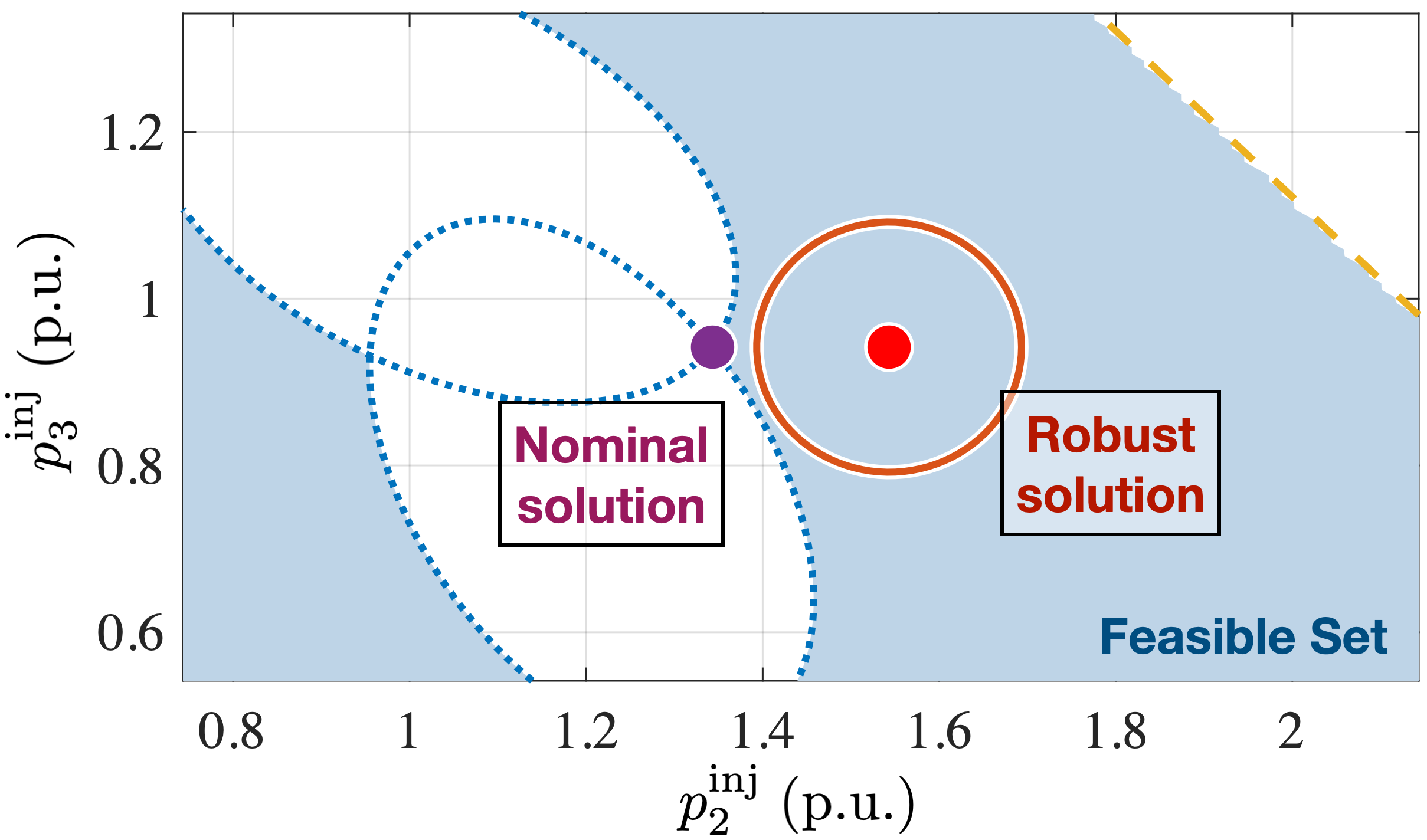}
	\caption{Illustration of the nominal (non-robust) solution and a robust OPF solution for a 9-bus system. The blue region is the nonconvex feasible space defined by the maximum voltage magnitude limits at buses 6 and 8 (two blue ovals) and minimum active power generation limit at the slack bus (yellow dashed line at the top right corner). Whereas generation uncertainty at buses~2 and 3~can result in the nominal solution (purple dot) easily violating the voltage limits, the robust solution (red dot) withstands all uncertainty realizations that are within the confidence ellipsoid (red circle).}
	\label{fig:intro}
	\vspace*{-1em}
\end{figure}

\begin{figure*}[!t]
	\centering
	\includegraphics[width=6.5in]{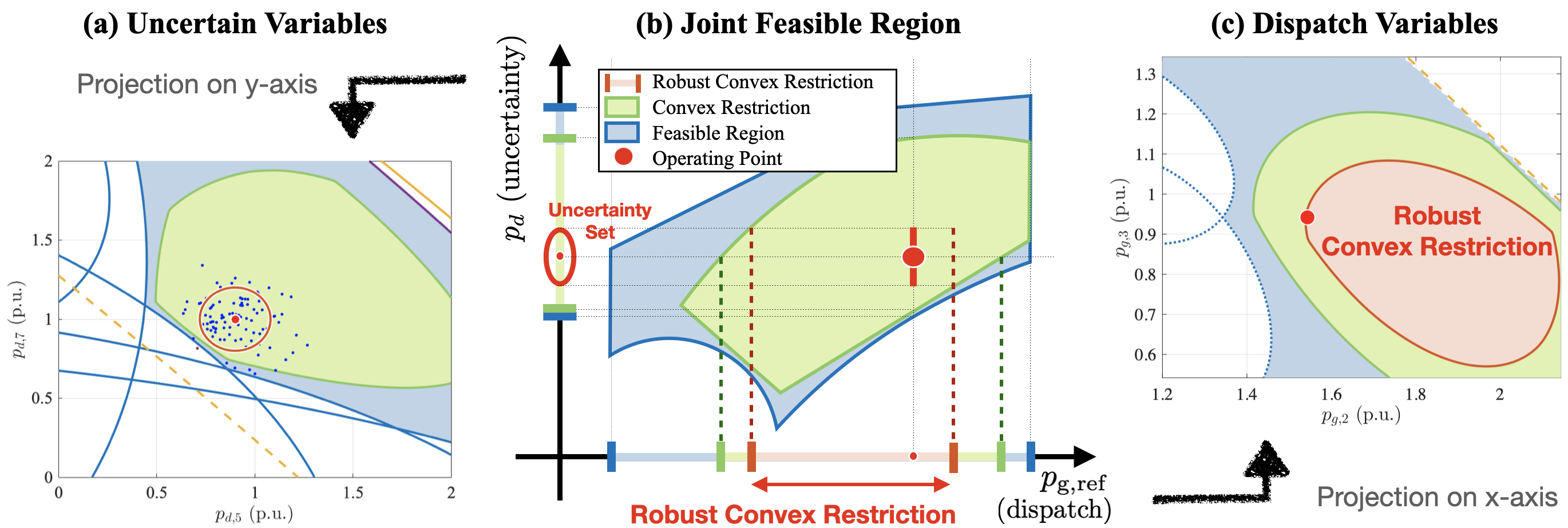}
	\caption{Illustration of a convex restriction and a robust convex restriction showing projections of (a)~uncertain variables on both axes, (b)~an uncertain variable on the $y$-axis and a dispatch variable on the $x$-axis, and (c)~dispatch variables on both axes. The red dots on the figures correspond to the nominal generation and load, and the circles in Figs.~\ref{fig:robust_cvxrs}(a) and \ref{fig:robust_cvxrs}(b) show the range of uncertainty at the load buses. The convex restriction from \cite{lee2018} (in green) is a convex subset of the feasible region of the non-convex AC OPF problem (in blue).
	The robust convex restriction derived in this paper takes the range of load scenarios described by the uncertainty set (red circle on the $y$-axis of Fig.~\ref{fig:robust_cvxrs}(b)) as an input and determines the range of dispatch variables (red range on the $x$-axis of Fig.~\ref{fig:robust_cvxrs}(b)) that ensures feasibility for all load scenarios in the uncertainty set. Figs.~\ref{fig:robust_cvxrs}(a) and \ref{fig:robust_cvxrs}(c) correspond to the IEEE 9-bus system, while 
	Fig.~\ref{fig:robust_cvxrs}(b) is a fictitious illustrative example.
	}
	\label{fig:robust_cvxrs}
\end{figure*}

Guaranteeing the robustness of AC OPF solutions against power injection uncertainty requires two conditions: for all uncertainty realizations, (i)~there exists a solution to the AC power flow equality constraints, and (ii)~the solution satisfies operational limits formulated as inequality constraints.\footnote{While protection against component failures (for instance, by enforcing N-1 security constraints~\cite{stott2012,Capitanescu}) is another essential consideration in many practical applications, this paper focuses on uncertain power injections.
}
Guaranteeing a solution to the AC power flow equations is particularly challenging since it is hard to capture the implicit nonlinear relationship between decision variables. Satisfying the AC power flow equations is important because this is a necessary condition for steady-state feasibility. However, most of the algorithms developed for chance-constrained and robust OPF problems simplify the AC power flow equations~\cite{molzahn_hiskens-fnt2019}, particularly through the use of linearizations~\cite{Jabr,roald2013,bienstock2014chance, roald2017irep, roald2018cc, Xie18} and convex relaxations~\cite{venzke2017}.
Other approaches in~\cite{molzahn_roald-acopf_robust2018,molzahn2019hicss,Bai2016} use convex relaxations to verify operational constraint satisfaction for a given uncertainty set, but do not guarantee AC power flow solvability for all uncertainty realizations.
The approach in~\cite{louca2018} relaxes the AC power flow equations by assuming that all buses have adjustable generators to balance power flows, which is not the case in many practical settings. 
Reference~\cite{muhlpfordt2019chance} uses polynomial chaos expansion to approximate the impacts of the uncertainties, with increasing accuracy coming at the expense of computational tractability.
Other related work in~\cite{amjady2017,lorca2017robust} considers multi-period OPF problems in the presence of uncertainty by iteratively solving the AC OPF problems with linearizations or convex relaxations. The OPF problem in~\cite{Nick18} considers the nonlinear AC power flow equations, but the model is limited to radial networks.
Alternative methods in~\cite{marley2016,Phan2014,Chamanbaz,capitanescu2012} use scenario-based approaches that enforce feasibility for selected uncertainty realizations. 
Despite much interest, existing algorithms for robust AC OPF do not guarantee the solution's feasibility to the full, nonlinear AC power flow equations for general, meshed network configurations.

\subsubsection{Convex Restriction}
Prior work by the authors in~\cite{nguyen2018constructing, lee2018, lee2019feasible} develops so-called \emph{convex restrictions} that provide a foundation for solving robust AC OPF problems with rigorous guarantees regarding both the solvability of AC power flow equations and the satisfaction of operational limits. The convex restrictions developed and applied in~\cite{nguyen2018constructing, lee2018, lee2019feasible} are conservative convex inner approximations of the feasible spaces of AC OPF problems. In other words, each point within the region defined by the convex restriction has an AC power flow solution that satisfies the operational limits. This definition contrasts with \emph{convex relaxations}, which extend the nonconvex AC~OPF feasible space to become convex by adding infeasible points. Reference~\cite{nguyen2018constructing} proposed the initial theory for convex restrictions with respect to the AC power flow equations and certain operational constraints. Reference~\cite{lee2018} developed new techniques for addressing the nonlinear terms in the AC power flow equations along with other extensions that significantly expand the capabilities of the convex restrictions. In~\cite{lee2019feasible}, the convex restrictions were applied to compute a path between operating points such that all points on the path are guaranteed to be feasible. 

Techniques discussed in this work can be applied to settings other than AC power flow equations. However, similar to convex relaxations, convex restrictions should always be tailored to the specific nonlinearities of the problem.  For example, in~\cite{lee2020constrained}, convex restrictions are adapted to nonlinear time-difference equations arising in a different context, namely robust model predictive control for autonomous navigation.


\subsection{Contribution}
While the previously developed convex restrictions provide a set of admissible power injections, these restrictions are constructed without considering a priori models of the uncertain disturbances, e.g., variations of the renewables and demand forecast errors.
The robust AC OPF problem poses two unique challenges that are not considered in our previous work: First, the dispatch point needs to be robust against a range of possible load realizations in a \emph{pre-specified} uncertainty set, i.e., there needs to exist a feasible AC power flow solution for all possible realizations within the uncertainty set. This is different from a standard AC OPF problem considered in \cite{lee2018, lee2019feasible}, which treats the load as a fixed parameter. 
Second, the dispatch point must be \emph{optimized} with respect to the generation cost, and this requires additional flexibility compared to security assessment in \cite{nguyen2018constructing}.

To solve robust AC OPF problems, this paper introduces a \emph{robust convex restriction}, i.e., a convex sufficient condition on the dispatch variables. All points within the convex restriction are guaranteed to be \emph{robustly feasible} with respect to a given uncertainty set. In other words, all operating points within the robust convex restriction will remain solvable and feasible when perturbed by all uncertainty realizations within the specified uncertainty set.

Fig.~\ref{fig:robust_cvxrs} provides an illustrative example showing feasible spaces for an AC OPF problem, a convex restriction, and a robust convex restriction. A robust OPF dispatch solution must ensure that the solution remains feasible for all possible variations in the load, which corresponds to the red circle in Fig.~\ref{fig:robust_cvxrs}(a) and the $y$-axis in Fig.~\ref{fig:robust_cvxrs}(b).
The system operators need to choose the active power generation dispatch that minimizes the generation cost, which corresponds to the red points in Fig.~\ref{fig:robust_cvxrs}(c) and on the $x$-axis in Fig.~\ref{fig:robust_cvxrs}(b). 
The challenge in constructing the robust convex restriction is that we need to consider a range of possible load realizations which are random/uncontrollable, but known to lie within a pre-specified uncertainty set (red circle). The robust convex restriction takes the load uncertainty as an input and expresses the range of dispatch variables within which all load realizations are guaranteed to remain feasible. 
An example of a robust operating point is the red dot, with the red bar showing the possible load variations.

The main contributions of this paper are summarized below.

\begin{enumerate}
\item We develop a \emph{robust convex restriction}, which provides a convex inner approximation to the \emph{robust} AC optimal power flow problem. 
The robust condition ensures the feasibility of all the operating constraints and the AC power flow equations for any realizations of the power injections within the specified set and hence provably guarantees robust feasibility. 
In addition, the convexity of the condition brings computational advantages.

\item We use the robust convex restriction to formulate a robust OPF problem that either (i) maximizes the uncertainty margin or (ii) minimizes the worst-case generation cost, while guaranteeing robustness against a given uncertainty set. We then propose a tractable algorithm for AC OPF problems that solves a sequence of convex optimization problems to provide solutions with provable robust feasibility guarantees. To the best of the authors' knowledge, these are the first solution methods to provide rigorous robust feasibility guarantees for nonlinear AC OPF problems with meshed topologies, without requiring controllable loads or generators at every node.

\item We demonstrate the effectiveness and tractability of this algorithm using numerical experiments on PGLib test cases~\cite{pglib}. The results illustrate the ability of our algorithm to control the trade-off between the level of robustness and the operating cost.
\end{enumerate}

This paper is organized as follows. Section~\ref{sec:model} introduces the system model. Section~\ref{sec:robust} formalizes robust feasibility. Section~\ref{sec:suff_cond} derives the sufficient condition for robust feasibility. Section~\ref{sec:algorithm} leverages this condition to develop our proposed robust AC OPF algorithm. Section~\ref{sec:experiments} empirically demonstrates this algorithm's performance. Section~\ref{sec:conclusion} concludes the paper.

\section{System Model and Preliminaries} \label{sec:model}
This section introduces the OPF problem with consideration of uncertainty in power injections. We use a distributed slack generator formulation, generalizing the model in~\cite{lee2018,lee2019feasible}. The distributed slack plays an important role in determining the generators' response to the uncertain power injections.

\subsection{Notation}
The scalars $n_\textrm{b}$, $n_\textrm{g}$, $n_\textrm{pv}$, $n_\textrm{pq}$, $n_\textrm{l}$, and $n_\textrm{d}$ denote the number of buses, generators, PV, PQ buses, lines, and loads, respectively. The variables $p_\textrm{g}\in\mathbb{R}^{n_\textrm{g}}$ and $q_\textrm{g}\in\mathbb{R}^{n_\textrm{g}}$ represent the generators' active and reactive power outputs. Uncontrollable active and reactive power injections are denoted by $p_\textrm{d}\in\mathbb{R}^{n_\textrm{d}}$ and $q_\textrm{d}\in\mathbb{R}^{n_\textrm{d}}$ where positive values indicate stochastic loads and negative values indicate uncertain generation such as renewables. 
The voltage magnitudes and phase angles are $v\in\mathbb{R}^{n_\textrm{b}}$ and $\theta\in\mathbb{R}^{n_\textrm{b}}$.
The \textit{from} and \textit{to} buses for the lines are denoted by ``$\textrm{f}$'' and ``$\textrm{t}$''.
The non-reference, PV, and PQ elements of a vector are denoted with subscripts ``$\textrm{ns}$'', ``$\textrm{pv}$'', and ``$\textrm{pq}$''. 
Let $E\in\mathbb{R}^{n_\textrm{b}\times n_\textrm{l}}$ be the incidence matrix of the grid. 
The connection matrices for generator buses and load buses are denoted by $C_\textrm{g}\in\mathbb{R}^{n_\textrm{b}\times n_\textrm{g}}$ and $C_\textrm{d}\in\mathbb{R}^{n_\textrm{b}\times n_\textrm{d}}$, respectively.
The matrices $I$ and $\mathbf{0}$ denote identity and zero matrices of appropriate size.
The vertical concatenation of vectors $a$ and $b$ is denoted by $(a,b)$.

\subsection{AC Optimal Power Flow Problem Formulation}
For notational convenience, we denote the angle differences between the terminals of the transmission lines as $\varphi$:
\begin{equation}
	\varphi_l=\theta_l^\textrm{f}-\theta_l^\textrm{t}, \hskip2em l=1,\ldots,n_\textrm{l},
\end{equation}
where $\theta_i^\textrm{f}$ and $\theta_i^\textrm{t}$ are the phase angles of the \textit{from} bus and \textit{to} bus of line $l$. The AC~OPF problem is:
\begin{equation}
\underset{x,u,\overline{s}^\textrm{f},\overline{s}^\textrm{t}}{\text{minimize}} \hskip 1em c(p_\textrm{g})=\sum_{i=1}^{n_\textrm{g}} c_i (p_{\textrm{g},i})
\label{eqn:cost}
\end{equation}\vspace*{0em}
subject to: for $k=1,\ldots,n_\textrm{b}$, \vspace*{0em}
\begin{subequations} \begin{align}\label{eqn:ACPowerflow_Pbal}
p^\textrm{inj}_k=\sum_{l=1}^{n_\textrm{l}} v^\textrm{f}_lv^\textrm{t}_l\left(G^\textrm{c}_{kl}\cos{\varphi_l}+B^\textrm{s}_{kl}\sin{\varphi_l} \right)+G_\mathit{kk}^\textrm{d}v_k^2,\\ \label{eqn:ACPowerflow_Qbal}
q^\textrm{inj}_k=\sum_{l=1}^{n_\textrm{l}} v^\textrm{f}_lv^\textrm{t}_l\left(G^\textrm{s}_{kl}\sin{\varphi_l}-B^\textrm{c}_{kl}\cos{\varphi_l} \right)-B_\mathit{kk}^\textrm{d}v_k^2,
\end{align} \label{eqn:ACPowerflow} \end{subequations}\vspace*{-1em}
\begin{subequations} \begin{align}
p_{\textrm{g},i}^\textrm{ min}\leq&p_{\textrm{g},i}\leq p_{\textrm{g},i}^\textrm{max}, \ & i&=1,\ldots,n_\textrm{g}, \label{eqn:pgenlim} \\
q_{\textrm{g},i}^\textrm{min}\leq&q_{\textrm{g},i}\leq q_{\textrm{g},i}^\textrm{max}, \ & i&=1,\ldots,n_\textrm{g}, \label{eqn:qgenlim}\\
v_i^{\textrm{min}}\leq&v_i\leq v_i^{\textrm{max}}, \ & i&=1,\ldots,n_\textrm{b} \label{eqn:vmaglim} \\
\varphi^{\textrm{min}}_l\leq&\varphi_l\leq \varphi^{\textrm{max}}_l, \ & l&=1,\ldots,n_\textrm{l}, \label{eqn:anglelim} \\
(s_{\textrm{p},l}^\textrm{f})^2+&(s_{\textrm{q},l}^\textrm{f})^2\leq (s^{\textrm{max}}_l)^2, \ & l&=1,\ldots,n_\textrm{l}, \label{eqn:linelim_f} \\ (s_{\textrm{p},l}^\textrm{t})^2+&(s_{\textrm{q},l}^\textrm{t})^2\leq (s^{\textrm{max}}_l)^2, \ & l&=1,\ldots,n_\textrm{l}. \label{eqn:linelim_t}
\end{align} \label{eqn:OPconstr}%
\end{subequations}%
where the matrices $G^\textrm{c},\ G^\textrm{s},\ B^\textrm{c},\ B^\textrm{s}\in\mathbb{R}^{n_\textrm{b}\times n_\textrm{l}}$ and $G^\textrm{d},\ B^\textrm{d}\in\mathbb{R}^{n_\textrm{b}\times n_\textrm{b}}$ are transformed admittance matrices for the respective conductance and susceptance terms. 
The exact definitions of the transformed matrices are available in \cite{lee2019feasible}. The objective $c:\mathbb{R}^{n_g}\rightarrow\mathbb{R}$ is a monotonically increasing function of the active power generation. 
The active and reactive power injections are 
$p^\textrm{inj}=C_\textrm{g}p_\textrm{g}-C_\textrm{d}p_{\textrm{d}}$ and $q^\textrm{inj}=C_\textrm{g}q_\textrm{g}-C_\textrm{d}q_{\textrm{d}}$. 
Superscripts $\textrm{max}$ and $\textrm{min}$ denote the maximum and minimum limits of the associated quantity.
Constraints~\eqref{eqn:pgenlim} and~\eqref{eqn:qgenlim} impose the generators' active and reactive power output limits. Constraints~\eqref{eqn:vmaglim} and~\eqref{eqn:anglelim} limit the voltage magnitudes and the angle differences.
Constraints~\eqref{eqn:linelim_f} and~\eqref{eqn:linelim_t} impose line flow limits where
$s_{\textrm{p},l}^\textrm{f/t}$ and $s_{\textrm{q},l}^\textrm{f/t}$ are the active and reactive power flowing into the line $l$ at the \textit{from} and \textit{to} buses, respectively, and their definitions are given in in Appendix~\ref{appendix:ineq}.

\subsection{Power Injection Uncertainty Modelling}
This paper focuses on obtaining a robustly feasible operating point with respect to uncertainty in the uncontrollable power injections, such as load variations and forecast errors from renewable energy sources. 
The variable $w=(p_d,\, q_d)\in\mathbb{R}^{2n_d}$ consists of uncertain active and reactive power injections $p_\textrm{d}\in\mathbb{R}^{n_d}$ and $q_\textrm{d}\in\mathbb{R}^{n_d}$.
The nominal value of the uncertain variable is $w^{(0)}$.
We consider a bounded uncertainty set $\mathcal{W}$ modeled with a \textit{confidence ellipsoid} containing all power injections within a ball of radius $\gamma$ centered on the nominal power injections:
\begin{equation}
\mathcal{W}(\gamma)=\{w\mid (w-w^{(0)})^T\Sigma^{-1}(w-w^{(0)})\leq \gamma^2\}.
\label{eqn:ellipsoid}
\end{equation}
The power injection covariance matrix $\Sigma\in\mathbb{R}^{2n_d\times 2n_d}$ and the radius $\gamma\in\mathbb{R}$ determine the shape and size, respectively, of the uncertainty set. 
The covariance matrix $\Sigma$ captures the correlations between power injections.
By choosing an appropriate value for $\gamma$, the confidence ellipsoid can be designed such that the probability of containing the uncertainty realization is greater than the desired threshold.
For example, if the uncertainty is drawn from a univariate normal distribution, we can ensure that 95\% of the uncertainty realizations are within the confidence ellipsoid by setting $\gamma$ to be twice the variance. 

\subsection{Active Power Generation Recourse for System Balancing}
To balance the system during variations in loads, we consider a ``distributed slack'' model where each generator adjusts its active power output to account for the system-wide power imbalance. These adjustments occur according to the generators' participation factors.
The distributed slack model is formulated via an affine control policy:
\begin{equation}
p_{\textrm{g},i}(w)=p_{\textrm{g,ref},i}+\alpha_i\Delta,
\label{eq:distslack}\end{equation}
where $p_{\textrm{g,ref},i}$ is the nominal setpoint for the generator's active power output, $\alpha_i$ is the constant participation factor for generator $i$, and $\Delta\in\mathbb{R}$ is the system-side power imbalance.
The variable $\Delta$ is implicitly defined by the AC power flow equations~\eqref{eqn:ACPowerflow_Pbal} such that the active power is balanced across the system. Fluctuations in power injections lead to changes in $\Delta$ and consequent adjustments to the active power generation according to~\eqref{eq:distslack}. The distributed slack model generalizes a single slack bus formulation, which can be retrieved by setting all participation factors to~$0$ except for the participation factor of the slack bus, which is set to~$1$.\footnote{
While our formulation allows the participation factors $\alpha$ to be decision variables, permitting this flexibility in the problem led the solvers to encounter numerical problems. Therefore, we fix the participation factors to specified quantities. Improving the numerical stability of our method with variable participation factors is a subject of our future work.
}
The generators' reactive power outputs are not directly controlled, and they are set by the demand from the grid, determined by the AC power flow equations~\eqref{eqn:ACPowerflow_Qbal}.

\subsection{Variable Definitions}
\label{subsec:variable}
The buses are divided into two types according to the standard definitions in the distributed slack model:
\begin{itemize}[leftmargin=*]
    \item PV (generator) buses: $p_\textrm{g,ref}$ and $v_\textrm{g}$ are specified by the operators; $q_\textrm{pv}$ and $\theta_\textrm{pv}$ are implicitly defined by the AC power flow equations.
    \item PQ (load) buses: $p_\textrm{d}$ and $q_\textrm{d}$ are either fixed or uncertain parameters; $v_\textrm{pq}$ and $\theta_\textrm{pq}$ are implicitly defined by the AC power flow equations.
\end{itemize}
The angle at one arbitrarily chosen reference bus is set to zero.

Based on the bus types, we categorize all variables as \textit{dispatch variables} or \textit{internal states}, which are denoted by $u\in\mathbb{R}^{2n_g}$ and $x\in\mathbb{R}^{n_b+n_{pq}}$, respectively, with the following~definitions:
\begin{itemize}[leftmargin=*]
    \item \emph{Dispatch variables} refer to controllable quantities that can be set by the system operator, specifically, the active power generation setpoint and the voltage magnitude setpoint. A \emph{dispatch point} is defined by a vector composed of dispatch variables and is denoted by $u=(p_\textrm{g,ref},\,v_\textrm{g})$. 
    \item \emph{Uncertain variables} are uncontrollable quantities that are determined by external fluctuations such as renewables and demand. Uncertain variables are modeled as variations in power injections and are denoted by $w=(p_\textrm{d},\,q_\textrm{d})$.
    \item \emph{Internal states} are physical quantities that are implicitly determined by the physics of the power grid and are computed through the AC power flow equations. These include the phase angles at the non-reference buses, the voltage magnitudes at PQ buses, and the distributed slack variable~$\Delta$. Internal states are denoted by $x=(\theta_\textrm{ns},\,v_\textrm{pq},\,\Delta)$. 
\end{itemize}

Moreover, we introduce a transformed state variable $z=(\varphi,\,v_\textrm{pq},\,\Delta)$ that converts the phase angles $\theta_\textrm{ns}$ to phase angle differences $\varphi$. The transformed state is defined as \mbox{$z=Ax$} where $A=\mathbf{blkdiag}(E^T_\textrm{ns},I_{n_\textrm{pq}\times n_\textrm{pq}},1)$.
The operation $\mathbf{blkdiag}(\cdot)$ denotes the block-diagonal matrix with the arguments forming the diagonal submatrices.
The transformed state~$z$ enables working directly with the angle differences~$\varphi$ in the power flow equations' trigonometric functions.

Finally, we use the superscript $(0)$ to indicate a nominal point of any variable when the uncertainty is equal to its nominal value. That is, the nominal state variable $x^{(0)}$ is the solution to the AC power flow equations with $w=w^{(0)}$.

\section{Robust Feasibility of the AC Optimal Power Flow: Definition and Problem Formulation}
\label{sec:robust}
In this section, we introduce uncertainty into the nominal OPF problem. We first describe the unknown-but-bounded uncertainty set and then define the robust AC OPF problem.

\subsection{Robust Feasibility Against Power Injection Uncertainty}
\label{subsec:robust_definition}
Given the uncertainty model, the robust feasibility of a dispatch point is defined by the following statement:

\begin{definition}
A dispatch point $u$ is \textit{robustly feasible} if the OPF constraints are satisfied for all realizations of $w$ within the given uncertainty set $\mathcal{W}(\gamma)$. 
That is, for all $w\in\mathcal{W}(\gamma)$, there exists $x=(\theta_\textrm{ns},\,v_\textrm{pq},\,\Delta)$ such that \eqref{eqn:ACPowerflow} and \eqref{eqn:OPconstr} are satisfied.
\label{def:robust}\end{definition}

Robust feasibility is defined for a dispatch point $u$, which corresponds to the variables in the OPF problem that are directly controllable.
Power injection fluctuations $w$ change the internal states $x$ (e.g., the voltage magnitudes and phase angles) according to the AC power flow equations. Hence, the internal states adapt to the uncertainty realizations.

With this definition of robustness in mind, we aim to determine the dispatch point that minimizes the generation cost while ensuring the existence of internal states that satisfy the operational constraints for all uncertainty realizations.

\subsection{Objective Function of the Robust AC OPF Problem}
\label{subsec:robust_prob}
In this section, we discuss the objective function and constraints of the robust AC OPF problem.
There are two candidates for the objective function:
\begin{enumerate}
\item Nominal operating cost: minimize the generation cost evaluated when the uncertainty realization is equal to its nominal value (i.e., $c^{(0)}(p_\textrm{g}) = c(p_\textrm{g})\mid_{w=w^{(0)}}$).
\item Worst-case operating cost: minimize the generation cost for the worst possible realization in the uncertainty set (i.e., $c^u(p_\textrm{g}) = \max_{w\in\mathcal{W}(\gamma)}c(p_\textrm{g})$).
\end{enumerate}

While both the nominal and worst-case generation costs are important considerations, we will choose the worst-case generation cost as the objective function in this paper. 
The objective function is a nonlinear function of the dispatch variables because the power generation, $p_g$, is a function of power imbalance $\Delta$ as shown in \eqref{eq:distslack}. The power imbalance term $\Delta$ is implicitly defined by the AC power flow equations and generation recourse and is thus a nonconvex function of the decision variables.
We will formulate the convex restriction using an upper bound on $\Delta$ as an explicit variable $\Delta^u$ to obtain a convex optimization problem.
Our numerical studies in Section \ref{sec:experiments} will evaluate both the nominal and worst-case generation costs and show that those generation costs are strongly correlated.

\subsection{Robust AC OPF Problem Formulation}

Given our choice of objective function, the robust AC~OPF problem can be cast as the following optimization problem:
\begin{equation}
\underset{x,z,u,\overline{s}^\textrm{f},\overline{s}^\textrm{t}}{\text{minimize}} \hskip 1em c^u(p_\textrm{g})
\label{eqn:cost2}
\end{equation}
subject to: for all uncertainty realizations $w=(p_\textrm{d},q_\textrm{d})$ in $\mathcal{W}(\gamma)$, there exists internal states $x=(\theta_\textrm{ns},\,v_\textrm{pq},\,\Delta)$ such that the following three conditions hold: 

\noindent1)~active power balance for $k=1,\ldots,n_\textrm{b}$,
\begin{equation} \begin{aligned}
&\sum_{i=1}^{n_\textrm{g}} C_{\textrm{g},ki}(p_{\textrm{g},\textrm{ref},i}+\alpha_i\Delta)-\sum_{i=1}^{n_\textrm{d}} C_{\textrm{d},ki}\, p_{\textrm{d},i} \\
&\hskip3em=\sum_{l=1}^{n_\textrm{l}} v^\textrm{f}_lv^\textrm{t}_l\left(G^\textrm{c}_{kl}\cos{\varphi_l}+B^\textrm{s}_{kl}\sin{\varphi_l} \right)+G_\mathit{kk}^\textrm{d}v_k^2,\\ \end{aligned} \label{eqn:ACPowerflow_fp} \end{equation}
2)~reactive power balance for every PQ bus $k$,
\begin{equation}
-\sum_{i=1}^{n_\textrm{d}} C_{\textrm{d},ki}\, q_{\textrm{d},i}=\sum_{l=1}^{n_\textrm{l}} v^\textrm{f}_lv^\textrm{t}_l\left(G^\textrm{s}_{kl}\sin{\varphi_l}-B^\textrm{c}_{kl}\cos{\varphi_l}\right)-B_\mathit{kk}^\textrm{d}v_k^2,
\label{eqn:ACPowerflow_fq} \end{equation}
3)~the internal states and control variables satisfy the operational constraints in \eqref{eqn:OPconstr}. 

Equations~\eqref{eqn:ACPowerflow_fp} and~\eqref{eqn:ACPowerflow_fq} are substitutions of the definitions for the power injections and generation recourse into the AC power flow equations. 
Note that Equations~\eqref{eqn:ACPowerflow_fp} and~\eqref{eqn:ACPowerflow_fq} are sufficient to determine the internal states by solving a system of equations and unknowns of size $n_\textrm{b}+n_\textrm{pq}$.
The AC power flow equations for reactive power generations at PV buses are directly substituted into the reactive power capacity limits since they are not necessary for capturing the relationship between dispatch variables and internal states.

We require that the robust solution satisfies the operational constraints for all internal states that are realizable by the uncertainty set.

\subsection{Basis Function Formulation of the Power Flow Equations}
\label{subsec:basisF}
In this section, we rewrite the AC power flow equations in terms of basis functions that serve as building blocks for the power flow nonlinearities. The vector of nonlinear functions, $\psi:(\mathbb{R}^{n_\textrm{b}+n_\textrm{pq}},\mathbb{R}^{2n_\textrm{g}})\rightarrow\mathbb{R}^{n_\textrm{g}+2n_\textrm{l}+n_\textrm{b}}$, denotes the basis function,
$\psi(x,u)=(\psi^\mathrm{p},\, {\psi^\mathrm{cos}},\, {\psi^\mathrm{sin}},\, {\psi^\mathrm{quad}})$, where
\begin{subequations} \begin{align}
\psi_i^\mathrm{p}(x,u)&=p_{\textrm{g,ref},i}+\alpha_i\Delta, \ & i&=1,\ldots,n_\textrm{g}, \\
\psi_l^\mathrm{cos}(x,u)&=v^\mathrm{f}_lv^\mathrm{t}_l\cos{(\varphi_l)}, \ & l&=1,\ldots,n_\textrm{l}, \\
\psi_l^\mathrm{sin}(x,u)&=v^\mathrm{f}_lv^\mathrm{t}_l\sin{(\varphi_l)}, \ & l&=1,\ldots,n_\textrm{l}, \\
\psi_k^\mathrm{quad}(x,u)&=v_k^2, \ & k&=1,\ldots,n_\textrm{b}.
\end{align}\label{eqn:basisfunc}\end{subequations}

The AC power flow equations in \eqref{eqn:ACPowerflow} can be written in terms of the basis functions and the uncertain variables as 
\begin{equation}
M\psi(x,u)+Rw=0,
\label{eqn:opf_eqn}
\end{equation}
where $M\in\mathbb{R}^{(n_\textrm{b}+n_\textrm{pq})\times(n_\textrm{g}+2n_\textrm{l}+n_\textrm{b})}$ and $R\in\mathbb{R}^{(n_\textrm{b}+n_\textrm{pq})\times2n_\textrm{d}}$ are constant matrices defined as
\begin{equation}
M = \begin{bmatrix}
C_\textrm{g} & \!\!\!-G^\textrm{c} & \!\!\!-B^\textrm{s} & \!\!\!-G^\mathrm{d} \\
\mathbf{0} & B^\textrm{c}_\textrm{pq} & \!\!\!-G^\textrm{s}_\textrm{pq} & B^\mathrm{d}_\textrm{pq}
\end{bmatrix},\hskip0.5em
R = -\begin{bmatrix} C_\textrm{d} & \mathbf{0} \\ \mathbf{0} & C_\textrm{d,pq} \end{bmatrix}.
\label{eqn:pf_eqn}
\end{equation}

The matrix $B^\textrm{c}_\textrm{pq}\in\mathbb{R}^{n_\textrm{pq}\times n_\textrm{l}}$ is a submatrix of $B^\textrm{c}\in\mathbb{R}^{n_\textrm{b}\times n_\textrm{l}}$ containing the rows corresponding to the PQ buses. Matrices $G^\textrm{s}_\textrm{pq}$, $B^\textrm{d}_\textrm{pq}$, $C_\textrm{g,pq}$, and $C_\textrm{d,pq}$ are defined similarly. 

\section{Robust Convex Restriction}
\label{sec:suff_cond}
Robust feasibility of a solution requires that any power injections within the uncertainty set are feasible with respect to the non-convex feasible set of the OPF problem.
In this section, we present the robust convex restriction, which provides a convex sufficient condition for certifying that the dispatch point is robustly feasible for the AC OPF problem.
The condition generalizes convex restriction presented in in~\cite{lee2018,lee2019feasible}, which is a special case where the problem is deterministic (i.e. the uncertainty set contains only one point such that $\mathcal{W}(0)=\{w^{(0)}\}$).

\subsection{Preliminaries for Convex Restriction}
This section extends the concept of convex restriction in~\cite{lee2018,lee2019feasible} in order to incorporate the consideration of uncertain variables. 
A convex restriction is a convex inner approximation of the feasible set in the space of dispatch variables. 
The derivation of this inner approximation uses a fixed-point representation of the AC power flow equations and envelopes for the nonlinear functions.

\subsubsection{Fixed-Point Representation of the AC Power Flow Equations}
Let us define the function $f(x,u,w)=M\psi(x,u)+Rw$.
A solution to the power flow equations~\eqref{eqn:opf_eqn} can be interpreted as a zero for a system of nonlinear equations, i.e., $f(x,u,w)=0$. This set of nonlinear equations can be converted to a fixed-point form, which corresponds to Newton's iteration,
\begin{equation}\begin{aligned}
    x&=-J_{f,(0)}^{-1}Mg(x,u)-J_{f,(0)}^{-1}Rw.
\end{aligned}\label{eq:fixed-point-form}\end{equation}
Here $g:(\mathbb{R}^{n_\textrm{b}+n_\textrm{pq}},\mathbb{R}^{2n_\textrm{g}})\rightarrow\mathbb{R}^{3n_\textrm{b}+2n_\textrm{l}}$ is the residual of the nonlinear basis functions,
\begin{equation}
    g(x,u) = \psi(x,u) - J_{\psi,(0)}x.
    \label{eqn:residaul}
\end{equation}
The matrices $J_{f,(0)}\in\mathbb{R}^{(n_\textrm{b}+n_\textrm{pq})\times(n_\textrm{b}+n_\textrm{pq})}$ and  $J_{\psi,(0)}\in\mathbb{R}^{(n_\textrm{g}+n_\textrm{pq}+n_\textrm{b})\times(n_\textrm{b}+n_\textrm{pq})}$ are Jacobians evaluated at the nominal operating point, defined by
\begin{equation}\begin{aligned}
    J_{f,(0)}=\nabla_x f\mid_{(x^{(0)},u^{(0)})},\hskip1em 
    J_{\psi,(0)}=\nabla_x \psi\mid_{(x^{(0)},u^{(0)})}.
\end{aligned}\label{eqn:J_eval}\end{equation} 
These Jacobians are related by $J_{f,(0)}=MJ_{\psi,(0)}$. Equations~\eqref{eqn:opf_eqn} and~\eqref{eq:fixed-point-form} are equivalent given that the power flow Jacobian $J_{f,(0)}$ is non-singular.

\subsubsection{Upper Convex and Lower Concave Envelopes}
Denote the $k$-th element of the residual $g(x,u)$ as $g_k(x,u
)$. Upper convex and lower concave envelopes of the function $g_k$ are defined using lower and upper bounding functions $g^\ell_k(x,u)$ and $g^u_k(x,u)$, respectively.
Each envelope is defined such that the following condition holds for all $x$ and $u$ within the operational limits:

\begin{equation}
    g^\ell_k(x,u)\leq g_k(x,u)\leq g^u_k(x,u),
    \label{eq:envelope}
\end{equation}
where $g^u_k(x,u)$ is a convex function and $g^\ell_k(x,u)$ is a concave function with respect to $x$ and $u$. Similarly, let~$\psi^\ell$ and~$\psi^u$ define upper convex and lower concave envelopes for~$\psi$. An example of upper convex and lower concave envelopes is shown by the solid red lines in Fig.~\ref{fig:envelope}. Expressions for the concave envelopes used in AC OPF problems are given in~\cite{lee2018}.

\begin{figure}[!htbp]
	\centering
	\includegraphics[width=3in]{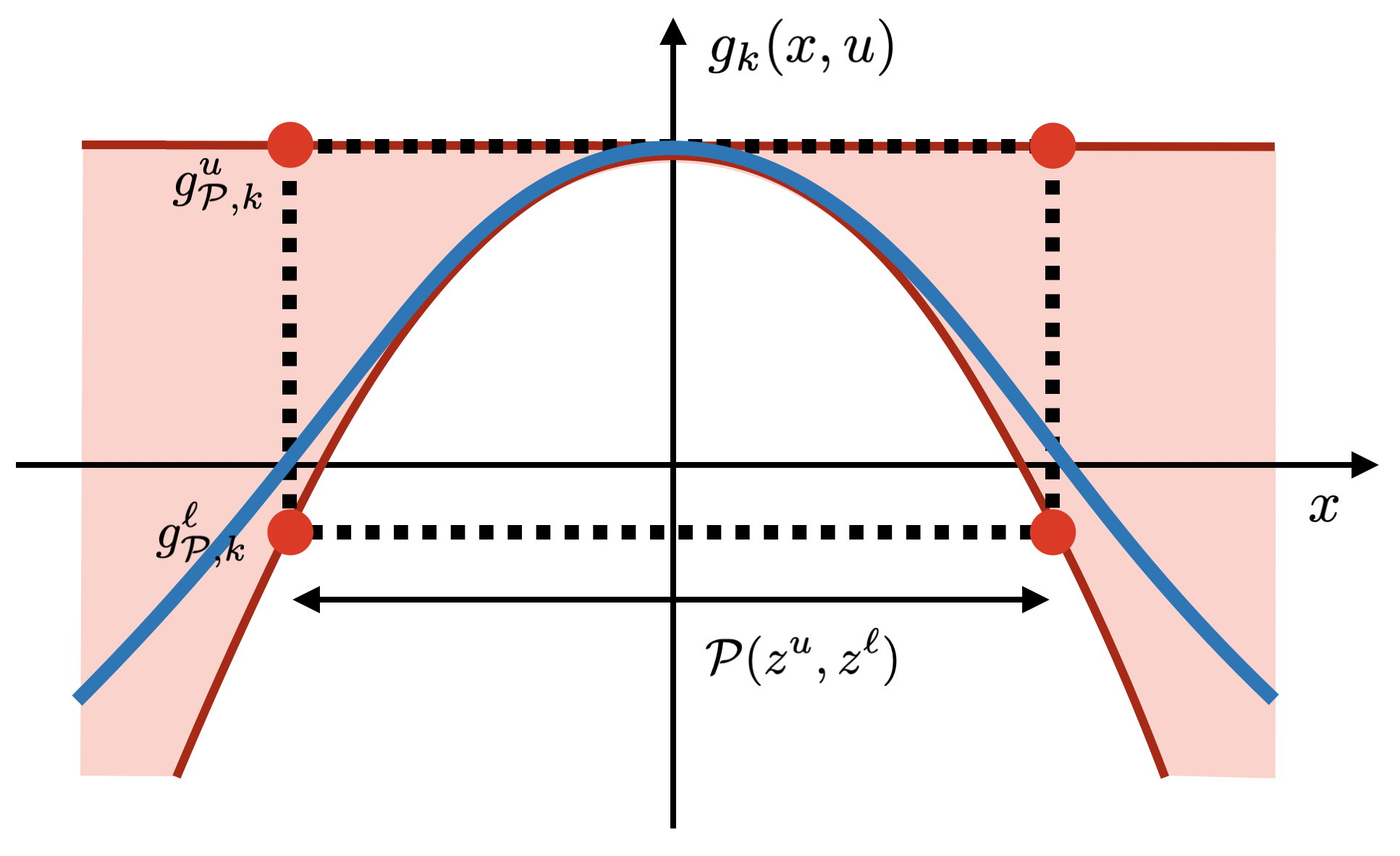}
	\caption{Illustration of upper convex and lower concave envelopes of the cosine function. The function $g_k(x,u)$ in blue is bounded by the upper convex and lower concave envelopes in red. $\mathcal{P}(z^u,z^\ell)$ is the set of possible internal states and the red dots represent the upper and lower bounds on $g_k(x,u)$ over $\mathcal{P}(z^u,z^\ell)$.}
	\label{fig:envelope}
\end{figure}

\subsubsection{Outer Approximations of the Internal States}
The internal states adapt to both the uncertainty realizations and the choice of dispatch variables.
To bound the values taken by the internal states, 
we define an outer-approximation of the set of internal states as 
\begin{equation}
\mathcal{P}(z^u,z^\ell)=\left\{x\bigm| z^\ell\leq Ax\leq z^u\right\},
\label{eqn:P}
\end{equation}
which is described by the parameter $z^u,z^\ell\in\mathbb{R}^{n_\textrm{l}+n_\textrm{pq}+1}$, and the matrix $A$ was defined in Section \ref{subsec:variable}. 
This set can be equivalently written as
$\mathcal{P}(z^u,\,z^\ell)=\{x\mid \varphi^\ell\leq E_\textrm{ns}\theta_\textrm{ns}\leq\varphi^u,\; v^\ell_\textrm{pq}\leq v_\textrm{pq}\leq v^u_\textrm{pq},\;\Delta^\ell\leq\Delta\leq\Delta^u\}$. 
The convex restriction will be used to verify that the set $\mathcal{P}(z^u,\, z^\ell)$ contains internal state solutions to the AC power flow equations for all uncertainty realizations.
Later, we will treat the upper and lower bounds $z^u$ and $z^\ell$ as decision variables.

\subsubsection{Bounds on Residuals over $\mathcal{P}(z^u,z^\ell)$}
We next define variables that represent over- and under-estimators of the residual, $g_k(x,u)$, over the set $\mathcal{P}(z^u,z^\ell)$. These over- and under-estimators are denoted by $g^u_{\mathcal{P},k}$ and $g^\ell_{\mathcal{P},k}$, respectively, and are defined such that for all $x\in\mathcal{P}(z^u,z^\ell)$ and $u$ within the operational limits,
\begin{equation}
g^\ell_{\mathcal{P},k}\leq g_{k}(x,u)\leq g^u_{\mathcal{P},k}, \hskip1em \forall\, x\in\mathcal{P}(z^u,z^\ell),\, \forall\,u.
\label{eqn:bounds}
\end{equation}

The inequality \eqref{eqn:bounds} needs to be satisfied for a set of continuous variables $x$ and thus corresponds to an infinite number of constraints.
For this inequality to become tractable, we need to replace \eqref{eqn:bounds} with a finite set of deterministic constraints. 
The following lemma uses the upper convex and lower concave envelopes from \eqref{eq:envelope} to enforce
\begin{equation}
    g^\ell_{\mathcal{P},k}\leq g^\ell_k(x,u), \hskip1em g^u_k(x,u)\leq g^u_{\mathcal{P},k}
\end{equation}
for all $x\in\mathcal{P}(z^u,z^\ell)$ and $u$ in order to provide a set of deterministic convex constraints for \eqref{eqn:bounds}.

\begin{lemma}
Suppose that for all vertices of the set $\mathcal{P}(z^u,\,z^\ell)$ (i.e., $\forall$ $v_l^\textrm{f}\in\{v_l^{\textrm{f},\ell},v_l^{\textrm{f},u}\}$, $v_l^\textrm{t}\in\{v_l^{\textrm{t},\ell},v_l^{\textrm{t},u}\}$, $\varphi_l\in\{\varphi_l^\ell,\varphi_l^u\}$, and $v_k\in\{v_k^\ell,v_k^u\}$), the following inequalities are satisfied:
\begin{subequations} \begin{align}
g_{\mathcal{P},i}^{\mathrm{p},\ell}&\leq g_i^{\mathrm{p},\ell}(p_{\textrm{g,ref},i}),
&g_i^{\mathrm{p},u}(p_{\textrm{g,ref},i})&\leq g_{\mathcal{P},i}^{\mathrm{p},u},  \label{eqn:g_P_a} \\
g_{\mathcal{P},l}^{\mathrm{cos},\ell}&\leq g_l^{\mathrm{cos},\ell}(v_l^\textrm{f},v_l^\textrm{t},\varphi_l), 
&g_l^{\mathrm{cos},u}(v_l^\textrm{f},v_l^\textrm{t},\varphi_l)&\leq g_{\mathcal{P},l}^{\mathrm{cos},u}, \label{eqn:g_P_b} \\
g_{\mathcal{P},l}^{\mathrm{sin},\ell}&\leq g_l^{\mathrm{sin},\ell}(v_l^\textrm{f},v_l^\textrm{t},\varphi_l),
&g_l^{\mathrm{sin},u}(v_l^\textrm{f},v_l^\textrm{t},\varphi_l)&\leq g_{\mathcal{P},l}^{\mathrm{sin},u} , \label{eqn:g_P_c} \\
g_{\mathcal{P},k}^{\mathrm{v},\ell}&\leq g_k^{\mathrm{v},\ell}(v_k),
&g_k^{\mathrm{v},u}(v_k)&\leq g_{\mathcal{P},k}^{\mathrm{v},u}, \label{eqn:g_P_d}
\end{align} \label{eqn:g_P}\end{subequations}%
for $i=1,\ldots,n_\textrm{g}$, $l=1,\ldots,n_\textrm{l}$, and $k=1,\ldots,n_\textrm{b}$.
Then the nonlinear residual bounds $g_{\mathcal{P}}^u$ and $g_{\mathcal{P}}^\ell$ satisfy \begin{equation}
g_{\mathcal{P},k}^\ell\leq g_{k}(x,u)\leq g_{\mathcal{P},k}^u, \hskip1em \forall\, x\in\mathcal{P}(z^u,z^\ell),\, \forall\,u.
\end{equation}
\label{lemma:g_bound}
\end{lemma}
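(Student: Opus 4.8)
The plan is to reduce the infinite family of constraints \eqref{eqn:bounds} to the finitely many vertex conditions \eqref{eqn:g_P} by exploiting the convexity/concavity structure of the envelopes $g^u_k$, $g^\ell_k$ established in \eqref{eq:envelope} together with the fact that $\mathcal{P}(z^u,z^\ell)$ is a box in the transformed coordinates. The key observation is that a convex function on a polytope attains its maximum at a vertex, and a concave function attains its minimum at a vertex; since $g^u_k$ is convex and $g^\ell_k$ is concave in $(x,u)$, their extreme values over $\mathcal{P}(z^u,z^\ell)$ (for fixed $u$, then uniformly in $u$) are controlled by evaluations at the vertices of $\mathcal{P}$.

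The steps, in order, would be: First, fix an arbitrary $x\in\mathcal{P}(z^u,z^\ell)$ and an arbitrary admissible $u$. By \eqref{eq:envelope} applied componentwise, $g^\ell_k(x,u)\le g_k(x,u)\le g^u_k(x,u)$, so it suffices to show $g^u_k(x,u)\le g^u_{\mathcal{P},k}$ and $g^\ell_{\mathcal{P},k}\le g^\ell_k(x,u)$. Second, I would argue that each residual component $g_k$ depends on $x$ only through a small set of the physical coordinates — for the $\psi^{\mathrm{cos}}$ and $\psi^{\mathrm{sin}}$ components through $(v^{\mathrm{f}}_l,v^{\mathrm{t}}_l,\varphi_l)$, for $\psi^{\mathrm{quad}}$ through $v_k$, and for $\psi^{\mathrm p}$ through $p_{\mathrm{g,ref},i}$ (and trivially $\Delta$, which is itself one of the box coordinates) — and that in each of these coordinate blocks the constraint set $\mathcal{P}(z^u,z^\ell)$, rewritten as in the displayed equivalent form $\{\varphi^\ell\le E_{\mathrm{ns}}\theta_{\mathrm{ns}}\le\varphi^u,\ v^\ell_{\mathrm{pq}}\le v_{\mathrm{pq}}\le v^u_{\mathrm{pq}},\ \Delta^\ell\le\Delta\le\Delta^u\}$, is a box whose vertices are exactly the sign patterns listed in the lemma's hypothesis (with the voltage magnitudes at the endpoints of the line's terminal buses playing the role of $v^{\mathrm f}_l,v^{\mathrm t}_l$). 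Third, invoking the maximum principle for convex functions over a polytope (a convex function on $\mathrm{conv}(V)$ is bounded above by $\max_{v\in V}$ of its values), I get $g^u_k(x,u)\le\max_{\text{vertices}} g^u_k(\cdot,u)$; by hypothesis \eqref{eqn:g_P} each vertex value is $\le g^u_{\mathcal{P},k}$, which gives the desired bound. The concave lower envelope is handled symmetrically with the minimum principle. Finally, since $x$ and $u$ were arbitrary, the conclusion holds for all $x\in\mathcal{P}(z^u,z^\ell)$ and all admissible $u$, which is exactly \eqref{eqn:bounds}.

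The main obstacle — and the point I would be most careful about — is the joint dependence on $u$ together with $x$. The hypothesis \eqref{eqn:g_P} only ranges over the \emph{vertices of $\mathcal{P}$} (the $x$-coordinates), leaving $u$ free in those inequalities; I must ensure the vertex-maximum argument is applied in $x$ for each fixed $u$ and that the resulting bound $g^u_{\mathcal{P},k}$ is genuinely $u$-independent, so that the universal quantifier over $u$ carries through. A subtlety here is that some residual components (e.g. $g^{\mathrm p}_i$) depend on $u$ through $p_{\mathrm{g,ref},i}$ but on $x$ only through $\Delta$, which is a box coordinate of $\mathcal{P}$, so the vertex enumeration must include the $\Delta^\ell,\Delta^u$ endpoints; I would make explicit that the envelope functions are jointly convex/concave in $(x,u)$ so that the extreme-point argument applies to the $x$-polytope slice for each $u$. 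A second detail is verifying that $\mathcal{P}(z^u,z^\ell)$ is indeed a polytope whose relevant projections are boxes — this follows because $A$ is block-diagonal so the constraints decouple coordinate-blockwise — and that the envelopes' domains (the operational limits) contain $\mathcal{P}$, which is what \eqref{eq:envelope} presupposes. Once these points are nailed down, the rest is a direct application of the convex/concave extreme-point principle and chaining the inequalities.
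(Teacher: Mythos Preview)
Your proposal is correct and takes essentially the same approach as the paper's proof: both arguments rest on the fact that a convex (respectively concave) envelope attains its maximum (minimum) over a box at a vertex, then invoke the vertex hypotheses \eqref{eqn:g_P} and chain through \eqref{eq:envelope}; the paper's appendix only spells out the one-variable case $g^{\mathrm{v},u}(v_k)$ via an explicit convex combination and defers the multivariate case to \cite{lee2018}. Your concern about $\Delta$ entering $g^{\mathrm{p}}_i$ is unnecessary --- as the paper remarks immediately after the lemma, the linear $\alpha_i\Delta$ term is subtracted off in the residual \eqref{eqn:residaul}, so $g^{\mathrm{p}}_i$ is constant in $x$ --- and the ``$\forall\,u$'' in the conclusion should be read as holding for the $u$ that appears in the hypothesis, not as requiring the bounds $g^u_{\mathcal{P}},g^\ell_{\mathcal{P}}$ to be $u$-independent.
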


The proof for Lemma 1 is in the Appendix.
The residual terms $g^\mathrm{p}$, $g^\mathrm{cos}$, $g^\mathrm{sin}$, $g^\mathrm{v}$ are defined according to \eqref{eqn:residaul} where the non-linear functions $\psi$ are defined in \eqref{eqn:basisfunc}.
Note that \eqref{eqn:g_P_a} is only a function of $p_\textrm{g,ref}$ since the linear term associated with $\Delta$ gets subtracted in \eqref{eqn:residaul}. The number of constraints for \eqref{eqn:g_P_a}, \eqref{eqn:g_P_b}, \eqref{eqn:g_P_c}, and \eqref{eqn:g_P_d} are $n_\textrm{g}$, $2^3\cdot n_\textrm{l}$, $2^3\cdot n_\textrm{l}$, and $2\cdot n_\textrm{b}$, respectively.

Lemma \ref{lemma:g_bound} states that the upper and lower bounds of the function $g_k(x,u)$ over the set $\mathcal{P}(z^u,\,z^\ell)$ occur at one of the extreme points of the set $\mathcal{P}(z^u,\,z^\ell)$.
Fig.~\ref{fig:envelope} provides an illustration where the black dashed rectangle contains the possible realizations of the function from the uncertainty set. The x-axis of the dashed rectangle represents the set $\mathcal{P}(z^u,\,z^\ell)$, which is the range of the internal states that can be realized from the uncertainty set.
The y-axis of the dashed rectangle represents the range of the nonlinear function $g_k(x,u)$ that can be realized from the uncertainty set. The bounds on the nonlinear function $g_k(x,u)$ over the uncertainty set realization can be obtained from the upper convex and lower concave envelope evaluated at the vertices of $\mathcal{P}(z^u,\,z^\ell)$.
This is shown as the vertices marked by the red dots in Fig.~\ref{fig:envelope}.
The following section derives a condition which ensures that $\mathcal{P}(z^u,z^\ell)$ is an outer approximation of the internal states.

\subsection{Robustness Condition from Fixed-Point Representation}
In this section, we present our approach for certifying robustness via a fixed-point argument. 
Robust feasibility requires that every realization in a \emph{pre-specified} uncertainty set has a corresponding solution to the AC power flow equations which satisfies the operational limits.
The problem in its original form cannot be expressed as a deterministic optimization problem since the equality and inequality constraints need to be satisfied for a set of uncertain variables.

There is a convenient way to handle robustness constraints via a fixed-point representation. We use the following lemma based on Brouwer's fixed-point theorem.

\begin{lemma}
Suppose that $\mathcal{P}\subseteq\mathbb{R}^{n_\textrm{b}+n_\textrm{pq}}$ is a non-empty, compact, convex set, and $G_{u,w}:\mathcal{P}\rightarrow\mathcal{P}$ is a continuous map for all $w\in\mathcal{W}$. Then for any $w\in\mathcal{W}$, there exists some $x\in\mathcal{P}$ such that $G_{u,w}(x)=x$.
\label{thm:Brouwer}
\end{lemma}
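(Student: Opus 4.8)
The plan is to derive this statement directly from Brouwer's fixed-point theorem, since the hypotheses are precisely those required: $\mathcal{P}$ is non-empty, compact, and convex, and $G_{u,w}$ maps $\mathcal{P}$ into itself continuously. The only subtlety is that we have a \emph{family} of maps indexed by $w\in\mathcal{W}$ rather than a single map, but the conclusion is stated pointwise in $w$ (``for any $w\in\mathcal{W}$, there exists some $x\in\mathcal{P}$''), so no uniformity over $w$ is needed. Thus I would fix an arbitrary $w\in\mathcal{W}$ and apply Brouwer's theorem to the single self-map $G_{u,w}:\mathcal{P}\to\mathcal{P}$.

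\begin{proof}
Fix an arbitrary $w\in\mathcal{W}$. By hypothesis, $\mathcal{P}\subseteq\mathbb{R}^{n_\textrm{b}+n_\textrm{pq}}$ is a non-empty, compact, convex set, and $G_{u,w}:\mathcal{P}\rightarrow\mathcal{P}$ is a continuous self-map. Brouwer's fixed-point theorem states that every continuous map from a non-empty, compact, convex subset of a finite-dimensional Euclidean space into itself has a fixed point. Applying this theorem to $G_{u,w}$ on $\mathcal{P}$, there exists $x\in\mathcal{P}$ with $G_{u,w}(x)=x$. Since $w\in\mathcal{W}$ was arbitrary, the claim holds for every $w\in\mathcal{W}$.
\end{proof}

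The main ``obstacle,'' such as it is, lies not in proving this lemma but in the surrounding construction needed to apply it: one must exhibit a concrete choice of $\mathcal{P}$ (which the excerpt anticipates via $\mathcal{P}(z^u,z^\ell)$ from \eqref{eqn:P}) together with a continuous map $G_{u,w}$ whose fixed points are exactly the AC power flow solutions, and then certify — using the envelopes and the vertex bounds of Lemma~\ref{lemma:g_bound} — that $G_{u,w}$ indeed maps $\mathcal{P}$ into $\mathcal{P}$ for \emph{all} $w\in\mathcal{W}(\gamma)$. Concretely, the natural candidate is the Newton-type fixed-point iteration in \eqref{eq:fixed-point-form}, i.e.\ $G_{u,w}(x) = -J_{f,(0)}^{-1}Mg(x,u) - J_{f,(0)}^{-1}Rw$, which is continuous (as a composition of the continuous basis functions with linear maps) and whose fixed points coincide with solutions of \eqref{eqn:opf_eqn} whenever $J_{f,(0)}$ is non-singular. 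Verifying the self-map property $G_{u,w}(\mathcal{P}(z^u,z^\ell))\subseteq\mathcal{P}(z^u,z^\ell)$ robustly over the ellipsoid $\mathcal{W}(\gamma)$ is what requires the residual bounds $g^u_{\mathcal{P},k}, g^\ell_{\mathcal{P},k}$ and yields the eventual convex restriction — but that development is downstream of the present lemma, which on its own is simply an invocation of Brouwer.
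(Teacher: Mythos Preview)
Your proof is correct and matches the paper's treatment: the paper does not give a separate proof of this lemma but simply remarks that it ``extends Brouwer's fixed-point theorem to the nonlinear map $G_{u,w}$,'' i.e., it is an immediate pointwise-in-$w$ application of Brouwer, exactly as you argue. Your surrounding commentary about where the real work lies (constructing $G_{u,w}$ via \eqref{eq:fixed-point-form} and verifying the self-map property over $\mathcal{W}(\gamma)$) is also accurate and anticipates the content of Theorem~\ref{thm_robustness}.
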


Lemma \ref{thm:Brouwer} extends Brouwer's fixed-point theorem \cite{brouwer1911abbildung} to the nonlinear map $G_{u,w}$, parameterized by both the dispatch variables and the uncertain variables.
If the set $\mathcal{P}$ is self-mapping (i.e., $\forall x\in\mathcal{P}$, $G_{u,w}(x)\in\mathcal{P}$) with respect to the fixed-point equation, then $\mathcal{P}$ provides an outer-approximation of the points $x$ that are realizable by the uncertainty set $\mathcal{W}$.
Using the fixed-point equation defined in \eqref{eq:fixed-point-form}, the condition for self-mapping can be expressed as
\begin{equation}\begin{aligned}
    \forall x\in\mathcal{P},\hskip0.2em \forall w\in\mathcal{W},\hskip0.5em -J_{f,(0)}^{-1}Mg(x,u)-J_{f,(0)}^{-1}Rw\in\mathcal{P}.
\label{eqn:self_mapping}
\end{aligned}\end{equation}
Substituting the set $\mathcal{P}(z^u,\,z^\ell)$ defined in \eqref{eqn:P} into \eqref{eqn:self_mapping} yields
\begin{equation}\begin{aligned}
    \max_{x\in\mathcal{P},\; w\in\mathcal{W}} A_i\left(-J_{f,(0)}^{-1}Mg(x,u)-J_{f,(0)}^{-1}Rw\right) &\leq z^u_i, \\
    \min_{x\in\mathcal{P},\; w\in\mathcal{W}} A_i\left(-J_{f,(0)}^{-1}Mg(x,u)-J_{f,(0)}^{-1}Rw\right) &\geq z^\ell_i.
    \label{eqn:self_mapping_P}
\end{aligned}\end{equation}
for $i=1,\ldots,n_\textrm{l}+n_\textrm{pq}+1$.
The self-mapping condition in \eqref{eqn:self_mapping_P} is expressed by $2\cdot(n_\textrm{l}+n_\textrm{pq}+1)$ inequality constraints.
The number of inequality constraints for the self-mapping condition scales linearly with the system size because we design the outer approximation $\mathcal{P}(z^u,z^\ell)$ in \eqref{eqn:P} as an intersection of intervals.

The condition in \eqref{eqn:self_mapping_P} requires that an inequality is satisfied over the sets $\mathcal{P}$ and $\mathcal{W}$.
We next convert \eqref{eqn:self_mapping_P} to a deterministic constraint by bounding the left-hand side using envelopes and known analytical solutions for optimization problems.

\subsection{Sufficient Condition for Robust Feasibility}
In this section, we use the machinery developed so far to derive a convex sufficient condition for robust feasibility of a dispatch point $u$ against the uncertainty set $\mathcal{W}(\gamma)$. 
The condition allows us to solve a deterministic convex optimization problem to obtain a robustly feasible solution for the robust optimization problem in Section~\ref{subsec:robust_prob}. The following lemmas show analytical solutions to optimization problems that will be used to convert robustness constraints to deterministic constraints.

\begin{lemma}
The maximum of a linear function over a bounded set of intervals has the following analytical solution:
\begin{equation}
\max_{x\in[x^\ell, x^u]} c^Tx = (c^+)^T x^u + (c^-)^T x^\ell.
\end{equation}
where $c\in\mathbb{R}^n$ is a constant cost vector, and $c^+, c^- \in\mathbb{R}^n$ are defined as $c_i^+ = \max\{c_i,0\}$ and $c_i^- = \min\{c_i,0\}$ for all $i$.
\label{lemma:max_interval}
\end{lemma}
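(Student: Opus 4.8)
The plan is to prove Lemma~\ref{lemma:max_interval} by decomposing the cost vector into its positive and negative parts and optimizing each component of $x$ independently, which is legitimate because the feasible set is a product of intervals $\prod_{i=1}^n [x_i^\ell, x_i^u]$ and the objective $c^Tx = \sum_i c_i x_i$ is separable. First I would write $c = c^+ + c^-$ where $c_i^+ = \max\{c_i, 0\} \ge 0$ and $c_i^- = \min\{c_i, 0\} \le 0$, so that $c_i^+ c_i^- = 0$ for every $i$ and the two vectors have disjoint supports.

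The key step is then the coordinatewise maximization: since the constraints couple no two coordinates, $\max_{x\in[x^\ell,x^u]} c^Tx = \sum_{i=1}^n \max_{x_i\in[x_i^\ell, x_i^u]} c_i x_i$. For each $i$, the scalar function $x_i \mapsto c_i x_i$ is linear (hence monotone) on the interval $[x_i^\ell, x_i^u]$, so its maximum is attained at $x_i^u$ when $c_i \ge 0$ and at $x_i^\ell$ when $c_i \le 0$ (either endpoint works when $c_i = 0$). In both cases the maximum value equals $c_i^+ x_i^u + c_i^- x_i^\ell$: if $c_i \ge 0$ this is $c_i x_i^u + 0$, and if $c_i \le 0$ this is $0 + c_i x_i^\ell$. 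Summing over $i$ gives $\max_{x\in[x^\ell,x^u]} c^Tx = (c^+)^T x^u + (c^-)^T x^\ell$, as claimed.

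I would also note in passing the natural byproduct that the minimum admits the symmetric formula $\min_{x\in[x^\ell,x^u]} c^Tx = (c^+)^T x^\ell + (c^-)^T x^u$, obtained by applying the result to $-c$; this is what will actually be needed to handle the lower self-mapping inequality in~\eqref{eqn:self_mapping_P}. There is no real obstacle here: the lemma is essentially the statement that linear programming over a box has a closed-form vertex solution, and the only thing to be careful about is the bookkeeping that $c_i^+ x_i^u + c_i^- x_i^\ell$ correctly recovers the maximizing endpoint in every sign case, including the boundary case $c_i = 0$ where the "maximum" $0$ is independent of which endpoint is chosen. The whole argument is a few lines of elementary reasoning and requires no machinery beyond separability of the objective over a product domain.
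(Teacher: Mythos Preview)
Your proof is correct and takes essentially the same approach as the paper's: both exploit the separability of the linear objective over the box to reduce to coordinatewise optimization, choosing $x_i^u$ when $c_i\ge 0$ and $x_i^\ell$ otherwise. The paper phrases it as ``upper bound plus explicit maximizer $x^*$'' rather than your ``decompose and sum,'' but the content is identical.
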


\begin{lemma}
The maximum of a linear function over a bounded ellipsoid has the following analytical solution:
\begin{equation}
\max_{w\in\mathcal{W}(\gamma)} c^Tw
=c^T\,w^{(0)}+\gamma\lVert c^T\,\Sigma^{1/2}\rVert_2,
\label{eqn:robust_analytical}\end{equation}
where $\mathcal{W}(\gamma)$ is the confidence ellipsoid defined in \eqref{eqn:ellipsoid}, and $\Sigma^{1/2}$ is the Cholesky decomposition of $\Sigma$ such that $\Sigma = (\Sigma^{1/2})(\Sigma^{1/2})^T$.
\label{lemma:max_ellipsoid}
\end{lemma}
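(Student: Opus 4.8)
The plan is to prove Lemma~\ref{lemma:max_ellipsoid} by the standard Cauchy--Schwarz / Lagrangian argument for maximizing a linear functional over an ellipsoid, after first reducing to the centered, spherical case by a change of variables. Concretely, I would substitute $w = w^{(0)} + \Sigma^{1/2}y$, so that the constraint $(w-w^{(0)})^T\Sigma^{-1}(w-w^{(0)})\leq\gamma^2$ becomes $y^T y \leq \gamma^2$ (using $\Sigma^{-1} = (\Sigma^{1/2})^{-T}(\Sigma^{1/2})^{-1}$ and the fact that $\Sigma^{1/2}$ is invertible since $\Sigma$ is positive definite). The objective then reads $c^T w = c^T w^{(0)} + c^T\Sigma^{1/2}y = c^T w^{(0)} + ((\Sigma^{1/2})^T c)^T y$. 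So the problem collapses to maximizing a linear function $b^T y$ with $b = (\Sigma^{1/2})^T c$ over the Euclidean ball $\{y : \|y\|_2 \le \gamma\}$, plus the constant offset $c^T w^{(0)}$.

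The key step is then the classical fact that $\max_{\|y\|_2\le\gamma} b^T y = \gamma\|b\|_2$. I would prove this in two directions. For the upper bound, Cauchy--Schwarz gives $b^T y \le \|b\|_2\|y\|_2 \le \gamma\|b\|_2$ for every feasible $y$. For attainment, if $b\neq 0$ take $y^\star = \gamma b/\|b\|_2$, which is feasible ($\|y^\star\|_2 = \gamma$) and yields $b^T y^\star = \gamma\|b\|_2^2/\|b\|_2 = \gamma\|b\|_2$; if $b = 0$ the claim is trivially $0 = 0$. Substituting back, $\max_{w\in\mathcal{W}(\gamma)} c^T w = c^T w^{(0)} + \gamma\|b\|_2 = c^T w^{(0)} + \gamma\|(\Sigma^{1/2})^T c\|_2$, which matches the stated expression $c^T w^{(0)} + \gamma\|c^T\Sigma^{1/2}\|_2$ since $\|c^T\Sigma^{1/2}\|_2 = \|(\Sigma^{1/2})^T c\|_2$ (the $2$-norm of a row vector equals that of its transpose).

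There is essentially no hard obstacle here; this is a routine convex-optimization lemma. The only points requiring a little care are (i) confirming that $\Sigma^{1/2}$ is invertible so the change of variables is a bijection between the ellipsoid and the ball --- this follows from $\Sigma \succ 0$, which is implicit in $\mathcal{W}(\gamma)$ being a bounded (nondegenerate) ellipsoid with $\Sigma^{-1}$ appearing in its definition; and (ii) the bookkeeping that transposition does not change the Euclidean norm, so the formula can be written with $c^T\Sigma^{1/2}$ as in~\eqref{eqn:robust_analytical}. If one prefers to avoid the change of variables, an equivalent route is to write $c^T(w-w^{(0)}) = (c^T\Sigma^{1/2})\big(\Sigma^{-1/2}(w-w^{(0)})\big)$ and apply Cauchy--Schwarz directly to this factorization, with the constraint controlling the norm of the second factor; this yields the same bound and the same maximizer $w^\star = w^{(0)} + \gamma\,\Sigma^{1/2}(\Sigma^{1/2})^T c / \|(\Sigma^{1/2})^T c\|_2$. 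I would present the change-of-variables version as the cleanest.
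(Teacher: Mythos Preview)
Your proposal is correct and follows essentially the same route as the paper's proof: both perform the change of variables $w = w^{(0)} + \Sigma^{1/2}\tilde w$ to reduce the ellipsoidal constraint to a Euclidean ball, then invoke the standard fact $\max_{\lVert \tilde w\rVert_2\le\gamma} b^T\tilde w = \gamma\lVert b\rVert_2$. Your version is simply more detailed (explicit Cauchy--Schwarz bound, exhibited maximizer, and the $b=0$ edge case), but the underlying argument is identical.
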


The proofs of Lemma \ref{lemma:max_interval} and \ref{lemma:max_ellipsoid} are provided in Appendices~\ref{appendix:max_interval} and \ref{appendix:max_ellipsoid}.
These lemmas are used to upper bound the left-hand side of inequality \eqref{eqn:self_mapping_P} where Lemma \ref{lemma:max_interval} is used to bound the term $-A_iJ_{f,(0)}^{-1}Mg(x,u)$, and Lemma 3 is used to bound the term $-A_iJ_{f,(0)}^{-1}Rw$.
To simplify our notation, let us define the constant matrix $K$ by 
\begin{equation}
    K=-AJ_{f,0}^{-1}M,
\end{equation}
and let the matrices $K^+,\,K^-\in\mathbb{R}^{(n_\textrm{l}+n_\textrm{pq}+1)\times (n_\textrm{g}+2n_\textrm{l}+n_\textrm{b})}$ be $K^+_{ij}=\max\{K_{ij},0\}$ and $K^-_{ij}=\min\{K_{ij},0\}$ for each element of $K$. The following theorem provides a convex sufficient condition that ensures $\mathcal{P}$ is an outer-approximation of the possible realizations of the internal states.

\begin{theorem}{(Robust Solvability Condition for AC Optimal Power Flow)}
Given a dispatch point $u=(p_{g,\textrm{ref}},\, v_\textrm{g})$, suppose that there exist internal state bounds $z^u$, $z^\ell$ and residual bounds $g^u_\mathcal{P}$, $g^\ell_\mathcal{P}$ that satisfy the inequality conditions in \eqref{eqn:g_P} and
\begin{equation}\begin{aligned}
K^+ g^u_\mathcal{P}+K^- g^\ell_\mathcal{P}+\xi^u(\gamma) &\leq z^u, \\
K^+ g^\ell_\mathcal{P}+K^- g^u_\mathcal{P}+\xi^\ell(\gamma) &\geq z^\ell,
\end{aligned}\label{eqn:conv_restr_eq}\end{equation}
where the margins $\xi_i(\gamma)$ and $\zeta_i(\gamma)$ are linear functions of $\gamma$:
\begin{equation}\begin{aligned}
\xi^u_i(\gamma)=-A_iJ_{f,0}^{-1}Rw^{(0)}+\gamma\left\lVert A_iJ_{f,0}^{-1}R\,\Sigma^{1/2}\right\rVert_2, \\
\xi^\ell_i(\gamma)=-A_iJ_{f,0}^{-1}Rw^{(0)}-\gamma\left\lVert A_iJ_{f,0}^{-1}R\,\Sigma^{1/2}\right\rVert_2. \\
\end{aligned}\end{equation}
Then for every $w\in\mathcal{W}(\gamma)$, there exists an internal state solution that satisfies the AC power flow equations and the solution can be outer approximated by $\mathcal{P}(z^u,\,z^\ell)$ (i.e., $x\in\mathcal{P}(z^u,\,z^\ell)$).
\label{thm_robustness}
\end{theorem}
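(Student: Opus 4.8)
The plan is to verify the hypotheses of Lemma~\ref{thm:Brouwer} for the fixed-point map $G_{u,w}(x) = -J_{f,(0)}^{-1}Mg(x,u) - J_{f,(0)}^{-1}Rw$ with $\mathcal{P} = \mathcal{P}(z^u,z^\ell)$, and then invoke that lemma to conclude existence of a fixed point in $\mathcal{P}$ for every $w\in\mathcal{W}(\gamma)$; a fixed point of $G_{u,w}$ is exactly a solution to the AC power flow equations \eqref{eqn:opf_eqn} by the equivalence noted after \eqref{eqn:J_eval}. The set $\mathcal{P}(z^u,z^\ell)$ is a (possibly degenerate) box in the $Ax$-coordinates, hence compact and convex; continuity of $G_{u,w}$ is immediate since $g$ is built from polynomials and trigonometric functions. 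So the crux is the self-mapping property: I must show that the inequalities \eqref{eqn:conv_restr_eq} together with \eqref{eqn:g_P} imply $G_{u,w}(x)\in\mathcal{P}(z^u,z^\ell)$ for all $x\in\mathcal{P}(z^u,z^\ell)$ and all $w\in\mathcal{W}(\gamma)$, which by definition of $\mathcal{P}$ means $z^\ell \le A G_{u,w}(x) \le z^u$ componentwise.

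First I would fix an index $i$ and split $A_i G_{u,w}(x)$ into the two pieces already flagged in the text: a ``residual'' term $K_i\, g(x,u) = -A_i J_{f,(0)}^{-1} M g(x,u)$ and an ``uncertainty'' term $-A_i J_{f,(0)}^{-1} R w$. For the uncertainty term, Lemma~\ref{lemma:max_ellipsoid} gives $\max_{w\in\mathcal{W}(\gamma)}\bigl(-A_i J_{f,0}^{-1} R w\bigr) = -A_i J_{f,0}^{-1}Rw^{(0)} + \gamma\lVert A_i J_{f,0}^{-1}R\,\Sigma^{1/2}\rVert_2 = \xi^u_i(\gamma)$, and the corresponding minimum equals $\xi^\ell_i(\gamma)$; this is exactly why the margins $\xi^u,\xi^\ell$ are defined as they are. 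For the residual term, I would bound $K_i\, g(x,u)$ over $x\in\mathcal{P}(z^u,z^\ell)$ by replacing each component $g_k(x,u)$ with its interval estimator: since by Lemma~\ref{lemma:g_bound} (whose hypotheses are precisely \eqref{eqn:g_P}) we have $g^\ell_{\mathcal{P},k}\le g_k(x,u)\le g^u_{\mathcal{P},k}$ for all admissible $x,u$, Lemma~\ref{lemma:max_interval} applied with cost vector $c = K_i^T$ yields $\max_{x\in\mathcal{P}} K_i g(x,u) = K_i^+ g^u_\mathcal{P} + K_i^- g^\ell_\mathcal{P}$ and $\min_{x\in\mathcal{P}} K_i g(x,u) = K_i^+ g^\ell_\mathcal{P} + K_i^- g^u_\mathcal{P}$, using the sign-split notation $K^+,K^-$ introduced above the theorem.

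Combining: for any $x\in\mathcal{P}(z^u,z^\ell)$ and $w\in\mathcal{W}(\gamma)$,
\begin{equation*}
A_i G_{u,w}(x) \le K_i^+ g^u_\mathcal{P} + K_i^- g^\ell_\mathcal{P} + \xi^u_i(\gamma) \le z^u_i,
\end{equation*}
where the second inequality is the first line of \eqref{eqn:conv_restr_eq}; symmetrically $A_i G_{u,w}(x) \ge K_i^+ g^\ell_\mathcal{P} + K_i^- g^u_\mathcal{P} + \xi^\ell_i(\gamma) \ge z^\ell_i$ by the second line. Since this holds for every $i=1,\dots,n_\textrm{l}+n_\textrm{pq}+1$, we get $z^\ell \le A G_{u,w}(x) \le z^u$, i.e.\ $G_{u,w}(x)\in\mathcal{P}(z^u,z^\ell)$, establishing self-mapping. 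Lemma~\ref{thm:Brouwer} then furnishes, for each $w\in\mathcal{W}(\gamma)$, a fixed point $x\in\mathcal{P}(z^u,z^\ell)$, which is an AC power flow solution outer-approximated by $\mathcal{P}(z^u,z^\ell)$, as claimed.

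The step I expect to require the most care is the interchange between ``self-mapping for each fixed $(x,w)$'' and the decoupled worst-case bounds: the maximum of the sum $K_i g(x,u) + \xi$-term is being bounded by the sum of the separate maxima over $x$ and over $w$, which is valid precisely because the two terms depend on disjoint variables ($x$ versus $w$) and $\mathcal{P}$, $\mathcal{W}$ are independent sets—so $\max_{x,w}(\text{term}_1(x) + \text{term}_2(w)) = \max_x \text{term}_1 + \max_w \text{term}_2$ exactly, with no slack. One should also be careful that Lemma~\ref{lemma:g_bound} requires $u$ to lie within the operational limits, so the theorem's guarantee is implicitly conditioned on the dispatch point $u$ being feasible for \eqref{eqn:OPconstr}; this is consistent with how the condition is used in the subsequent optimization problem. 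Everything else is bookkeeping with the sign-split notation.
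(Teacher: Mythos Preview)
Your proposal is correct and follows essentially the same route as the paper's proof: split $A_iG_{u,w}(x)$ into the residual term $K_ig(x,u)$ and the uncertainty term $-A_iJ_{f,0}^{-1}Rw$, bound the former via Lemma~\ref{lemma:g_bound} and the sign-split of $K$ (Lemma~\ref{lemma:max_interval}) and the latter via Lemma~\ref{lemma:max_ellipsoid}, then invoke Brouwer (Lemma~\ref{thm:Brouwer}) for self-mapping. One cosmetic slip: your displayed ``$\max_{x\in\mathcal{P}} K_i g(x,u) = K_i^+ g^u_\mathcal{P} + K_i^- g^\ell_\mathcal{P}$'' should be an inequality $\le$, since $g^u_\mathcal{P},g^\ell_\mathcal{P}$ are only envelope-based bounds on $g$ over $\mathcal{P}$ rather than its exact range---but the inequality direction you need is unaffected.
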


The proof of Theorem \ref{thm_robustness} is provided in Appendix~\ref{appendix:proof}. Equation~\eqref{eqn:conv_restr_eq} ensures that the AC power flow equations have a solution with a corresponding internal state $x$ within $\mathcal{P}(z^u,\,z^\ell)$ for every uncertainty realization. Next, we derive a convex condition ensuring that all internal states in the outer approximation $\mathcal{P}(z^u,\,z^\ell)$ satisfy the operational constraints.


\begin{theorem}{(Robust Feasibility Condition for AC Optimal Power Flow)}
The dispatch point $u=(p_{g,\textrm{ref}},\, v_\textrm{g})$ is robustly feasible with respect to the uncertainty set $\mathcal{W}(\gamma)$ if there exist bounds on the internal states $z^u=(\varphi^u,\,v^u_\textrm{pq},\,\Delta^u)$, $z^\ell = (\varphi^\ell,\,v^\ell_\textrm{pq},\Delta^\ell)$, $g^u_\mathcal{P}$, $g^\ell_\mathcal{P}$, $\psi^u_\mathcal{P}$, and $\psi^\ell_\mathcal{P}$ that satisfy \eqref{eqn:g_P}, \eqref{eqn:conv_restr_eq}, and the following set of operational constraints:
\begin{subequations} \begin{align}
p_{\textrm{g},i}^\textrm{ min}\leq&p_{\textrm{g},\textrm{ref},i}+\alpha\Delta^\ell, \ & i&=1,\ldots,n_\textrm{g}, \\
&p_{\textrm{g},\textrm{ref},i}+\alpha\Delta^u\leq p_{\textrm{g},i}^\textrm{max}, \ & i&=1,\ldots,n_\textrm{g}, \\
q_{\textrm{g},i}^\textrm{min}\leq&q_{\textrm{g},i}^\ell,\hskip1.12em q_{\textrm{g},i}^u\leq q_{\textrm{g},i}^\textrm{max}, \ & i&=1,\ldots,n_\textrm{g},\\
v_{\textrm{pq},i}^{\textrm{min}}\leq&v_{\textrm{pq},i}^\ell,\hskip0.4em v_{\textrm{pq},i}^u\leq v_{\textrm{pq},i}^{\textrm{max}}, \ & i&=1,\ldots,n_\textrm{pq} \\
v_{\textrm{g},i}^{\textrm{min}}\leq&v_{\textrm{g},i}\leq v_{\textrm{g},i}^{\textrm{max}}, \ & i&=1,\ldots,n_\textrm{g} \\
\varphi^{\textrm{min}}_l\leq&\varphi_l^\ell,\hskip1.7em \varphi_l^u\leq \varphi^{\textrm{max}}_l, \ & l&=1,\ldots,n_\textrm{l}, \\
(s_{\textrm{p},l}^{\textrm{f},u})^2+&(s_{\textrm{q},l}^{\textrm{f},u})^2\leq (s^{\textrm{max}}_l)^2, \ & l&=1,\ldots,n_\textrm{l}, \\ (s_{\textrm{p},l}^{\textrm{t},u})^2+&(s_{\textrm{q},l}^{\textrm{t},u})^2\leq (s^{\textrm{max}}_l)^2, \ & l&=1,\ldots,n_\textrm{l}.
\end{align}
\label{thm:operational_constr}\end{subequations}
The reactive power generation bounds $q_\textrm{g}^\ell$ and $q_\textrm{g}^u$ satisfy
\begin{equation}\begin{aligned}
L_\textrm{q}^+\psi^u_\mathcal{P}+L_\textrm{q}^-\psi^\ell_\mathcal{P}+\zeta^u(\gamma) &\leq C_\textrm{g,pv}\, q_g^u,\\
L_\textrm{q}^-\psi^u_\mathcal{P}+L_\textrm{q}^+\psi^\ell_\mathcal{P}+\zeta^\ell(\gamma) &\geq C_\textrm{g,pv}\, q_g^\ell,
\end{aligned} \label{eqn:conv_restr_ineq1} \end{equation}
\begin{equation}\begin{aligned}
    \zeta_i^u(\gamma)&=C_{\textrm{d,pv},i}\Sigma^{1/2}_\textrm{q}w^{(0)}+\gamma\left\lVert C_{\textrm{d,pv},i}\Sigma^{1/2}_\textrm{q}\right\rVert_2, \\
    \zeta_i^\ell(\gamma)&=C_{\textrm{d,pv},i}\Sigma^{1/2}_\textrm{q}w^{(0)}-\gamma\left\lVert C_{\textrm{d,pv},i}\Sigma^{1/2}_\textrm{q}\right\rVert_2,
\end{aligned} \label{eqn:zeta}\end{equation}
and the line flow bounds $s_\textrm{p}^{\textrm{t},u}$, $s_\textrm{q}^{\textrm{f},u}$, and $s_\textrm{q}^{\textrm{t},u}$ satisfy
\begin{equation}\begin{aligned}
L_\textrm{j, line}^{\textrm{k},+}\psi^u_\mathcal{P}+L_\textrm{j,line}^{\textrm{k},-}\psi^\ell_\mathcal{P}&\leq s_\textrm{j}^{\textrm{k},u},\\
-L_\textrm{j,line}^{\textrm{k},-}\psi^u_\mathcal{P}-L_\textrm{j,line}^{\textrm{k},+}\psi^\ell_\mathcal{P}&\leq s_\textrm{j}^{\textrm{k},u}.
\end{aligned}\label{eqn:conv_restr_ineq2}\end{equation}
for $\textrm{k}\in\{\textrm{f},\textrm{t}\}$ and $\textrm{j}\in\{\textrm{p},\textrm{q}\}$. The decision variables $\psi^u_\mathcal{P}$ and $\psi^\ell_\mathcal{P}$ are the basis function bounds over $\mathcal{P}(z^u,\,z^\ell)$, and they are constrained by  \eqref{eqn:g_P} in Lemma \ref{lemma:g_bound} by replacing the function $g$ by $\psi$.
\label{thm:thm2}\end{theorem}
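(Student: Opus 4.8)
The plan is to establish \cref{thm:thm2} by combining \cref{thm_robustness} (which certifies that the internal state lies in $\mathcal{P}(z^u,z^\ell)$ for every $w\in\mathcal{W}(\gamma)$) with a sequence of worst-case bounding arguments showing that every operational constraint in \eqref{eqn:OPconstr} holds throughout $\mathcal{P}(z^u,z^\ell)$ and for every $w$. By \cref{def:robust}, it suffices to exhibit, for each $w\in\mathcal{W}(\gamma)$, an internal state $x$ satisfying \eqref{eqn:ACPowerflow} and \eqref{eqn:OPconstr}. \cref{thm_robustness} already gives us such an $x$ with $x\in\mathcal{P}(z^u,z^\ell)$. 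So the whole task reduces to: if the hypotheses \eqref{thm:operational_constr}, \eqref{eqn:conv_restr_ineq1}, \eqref{eqn:conv_restr_ineq2} hold, then \emph{any} $x\in\mathcal{P}(z^u,z^\ell)$ paired with any admissible $w$ satisfies all the operational limits. I would handle the constraints group by group in the order they appear.

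First, the ``direct'' box constraints. For the voltage magnitudes at PQ buses \eqref{eqn:vmaglim}, note $x\in\mathcal{P}(z^u,z^\ell)$ means by definition $v^\ell_\textrm{pq}\le v_\textrm{pq}\le v^u_\textrm{pq}$, so \eqref{thm:operational_constr}d immediately yields $v^{\min}_\textrm{pq}\le v_\textrm{pq}\le v^{\max}_\textrm{pq}$; the angle-difference constraints \eqref{eqn:anglelim} follow identically from $\varphi^\ell\le\varphi\le\varphi^u$ and \eqref{thm:operational_constr}f; the generator voltage constraints are enforced directly on the dispatch variable $v_\textrm{g}$ via \eqref{thm:operational_constr}e. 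For the active generation limits \eqref{eqn:pgenlim}, substitute \eqref{eq:distslack}: since $p_{\textrm{g},i}(w)=p_{\textrm{g,ref},i}+\alpha_i\Delta$ and $\Delta\in[\Delta^\ell,\Delta^u]$ (assuming, as the paper's sign conventions intend, $\alpha_i\ge 0$, so the extremes of $p_{\textrm{g},i}$ over $\Delta$ occur at $\Delta^\ell$ and $\Delta^u$), the pair \eqref{thm:operational_constr}a--b sandwiches $p_{\textrm{g},i}(w)$ between its limits.

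Next, the constraints involving quantities that are \emph{implicitly} determined by the power flow, namely the PV-bus reactive generation \eqref{eqn:qgenlim} and the line flows \eqref{eqn:linelim_f}--\eqref{eqn:linelim_t}. The strategy here is the same ``affine-plus-nonlinear-residual'' decomposition used for the self-mapping condition: each such quantity is a linear combination of the basis functions $\psi(x,u)$ plus a linear term in $w$ — for $q_\textrm{g}$ this is $C_\textrm{g,pv}q_g = L_\textrm{q}\psi(x,u) + (\text{term in } q_d)$, and for the line flows $s_\textrm{j}^{\textrm{k}} = L_\textrm{j,line}^{\textrm{k}}\psi(x,u)$ (no $w$-dependence). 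I would then bound these over $x\in\mathcal{P}(z^u,z^\ell)$ and $w\in\mathcal{W}(\gamma)$: apply \cref{lemma:g_bound} (with $g$ replaced by $\psi$, as the theorem statement notes) to replace $\psi(x,u)$ by the vertex-evaluated bounds $\psi^\ell_\mathcal{P}\le\psi(x,u)\le\psi^u_\mathcal{P}$, apply \cref{lemma:max_interval} coordinatewise to pull the linear map $L$ through the box $[\psi^\ell_\mathcal{P},\psi^u_\mathcal{P}]$ — which is exactly why $L^+$ and $L^-$ appear, with $L^+$ hitting the upper bound and $L^-$ the lower — and apply \cref{lemma:max_ellipsoid} to the $w$-linear piece, which produces the $\zeta^u(\gamma),\zeta^\ell(\gamma)$ terms of \eqref{eqn:zeta} as the center-plus-$\gamma$-times-norm expression. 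Then \eqref{eqn:conv_restr_ineq1} forces $q^\ell_{\textrm{g},i}\le q_{\textrm{g},i}\le q^u_{\textrm{g},i}$, and \eqref{thm:operational_constr}c closes the chain to the physical limits. For the line flows, \eqref{eqn:conv_restr_ineq2} gives $|s_\textrm{j}^{\textrm{k}}|\le s_\textrm{j}^{\textrm{k},u}$ (the two inequalities there being the upper and lower bounds written so the bound is on $|s_\textrm{j}^{\textrm{k}}|$), so $(s_\textrm{p}^{\textrm{k}})^2+(s_\textrm{q}^{\textrm{k}})^2 \le (s_\textrm{p}^{\textrm{k},u})^2 + (s_\textrm{q}^{\textrm{k},u})^2 \le (s^{\max}_l)^2$ by \eqref{thm:operational_constr}g--h. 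The slight subtlety is that the "from" active power $s_\textrm{p}^{\textrm{f}}$ is being used directly (it is a decision variable $\overline{s}^\textrm{f}$), whereas the other three flow components get the envelope treatment — this just mirrors the book-keeping already set up in the OPF formulation and Appendix~\ref{appendix:ineq}.

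The main obstacle I anticipate is not any single inequality but rather the \emph{monotonicity and sign bookkeeping} that makes the vertex/interval reductions valid: one must verify that the worst case of each nonlinear basis function over the box $\mathcal{P}(z^u,z^\ell)$ genuinely occurs at a vertex (this is \cref{lemma:g_bound}, so it can be cited, but one must check its hypotheses \eqref{eqn:g_P} are invoked with $\psi$ in place of $g$), that $\alpha_i\ge0$ so that $p_{\textrm{g},i}$ is monotone in $\Delta$, and that the decompositions of $q_\textrm{g}$ and $s^{\textrm{k}}_\textrm{j}$ into $L\psi + Rw$ have exactly the matrices $L_\textrm{q}$, $L^{\textrm{k}}_{\textrm{j,line}}$ and covariance block $\Sigma_\textrm{q}$ claimed — these follow from the definitions in Appendix~\ref{appendix:ineq} but must be stated. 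Once those structural facts are in hand, the proof is a routine concatenation: \cref{thm_robustness} places $x$ in $\mathcal{P}$; \cref{lemma:g_bound}, \cref{lemma:max_interval}, and \cref{lemma:max_ellipsoid} convert each operational constraint into its robust counterpart; and the hypotheses \eqref{thm:operational_constr}--\eqref{eqn:conv_restr_ineq2} are precisely those robust counterparts, so \cref{def:robust} is satisfied and $u$ is robustly feasible.
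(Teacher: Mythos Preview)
Your proposal is correct and follows essentially the same route as the paper's proof: invoke \cref{thm_robustness} to place $x$ in $\mathcal{P}(z^u,z^\ell)$, then discharge each operational constraint in \eqref{eqn:OPconstr} by combining the basis-function envelopes (\cref{lemma:g_bound} with $\psi$ in place of $g$), the $L^+/L^-$ interval bound (\cref{lemma:max_interval}), and the ellipsoid maximum (\cref{lemma:max_ellipsoid}) for the $w$-dependent reactive-power term. One small misreading: your remark that $s_\textrm{p}^{\textrm{f}}$ is handled separately as a ``decision variable $\overline{s}^\textrm{f}$'' is not what the paper does---the omission of $s_\textrm{p}^{\textrm{f},u}$ from the list preceding \eqref{eqn:conv_restr_ineq2} appears to be a typo, and \eqref{eqn:conv_restr_ineq2} in fact covers all four flow components uniformly---but this does not affect the validity of your argument.
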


The proof of Theorem \ref{thm:thm2} is provided in Appendix~\ref{appendix:proof2}.
The matrices $L^+$ and $L^-$ are defined similarly to $K^+$ and $K^-$ where $L_\textrm{q}$ and $L_\textrm{line}$ are defined in Appendix \ref{appendix:ineq}.
The matrices $C_\textrm{d,pv}\in\mathbb{R}^{n_\textrm{pv}\times n_\textrm{d}}$ and $C_\textrm{g,pv}\in\mathbb{R}^{n_\textrm{pv}\times n_\textrm{g}}$ are submatrices of connection matrices that select the rows corresponding to PV buses.
Theorems~\ref{thm_robustness} and \ref{thm:thm2} generalize the convex restriction in~\cite{lee2018,lee2019feasible}, which is retrieved by setting $\gamma = 0$ (i.e., the case where the uncertainty set only contains the nominal power injections).
Equation~\eqref{eqn:conv_restr_eq} shows that robust feasibility is guaranteed by introducing extra margins, $\xi(\gamma)$ and $\zeta(\gamma)$, into the convex restriction condition.

The robust feasibility condition is convex with respect to all decision variables (e.g., $u$, $z^u$, $z^\ell$) and the uncertainty set size parameter $\gamma$. 
Our condition is formulated as a system of convex quadratic constraints for which there exists well-developed theory and algorithms for solving the resulting optimization problem. There are commercial solvers such as Mosek, Gurobi, and CPLEX that can be used to solve convex quadratically constrained quadratic programming problems. Moreover, the number of associated constraints scales linearly with the number of buses and the number of lines in the system, making the approach tractable for large systems.

\section{Algorithms for Robust OPF Problems}
\label{sec:algorithm}

OPF formulations seek the dispatch point with minimum generation cost while considering the load demands and renewable generation.
We consider two important questions:
\begin{itemize}
    \item What dispatch point is robust with respect to the largest uncertainty set?
    \item How can we compute a low-cost dispatch point that is robust with respect to a given uncertainty set?
\end{itemize}
We next develop optimization formulations and solution algorithms that use the previously presented robust feasibility condition in order to rigorously answer both of these questions.

\subsection{Maximizing the Robustness Margin}
We first consider a setting where the system operator seeks a dispatch point $u$ that maximizes the robustness margin.
The robustness margin is defined as the size of the uncertainty set against which the solution is robust, which in our case can be parametrized as the radius of the uncertainty set $\gamma$. When the system operator wants to maximize security with respect to uncertain power injections, the problem can be formulated as
\begin{equation}
    \begin{aligned}
        \underset{u,\,\gamma,\,z^u,\,z^\ell,\,g_\mathcal{P}^u,\,g_\mathcal{P}^\ell}{\text{maximize}} \hskip 1em & \gamma \\
        \text{subject to} \hskip 1em & \text{\eqref{eqn:g_P},\;\eqref{eqn:conv_restr_eq}--\eqref{eqn:conv_restr_ineq2}}.
    \end{aligned}
	\label{eqn:OPT_robust_feasibility}
\end{equation}
Since the constraints in~\eqref{eqn:OPT_robust_feasibility} include the sufficient condition for robustness, the solution $u$ is robust against any uncertainty realization $w\in\mathcal{W}(\gamma)$. The margin $\gamma$ computed by solving a convex optimization problem in~\eqref{eqn:OPT_robust_feasibility} is a guaranteed lower bound on the robustness margin.
In the case where the dispatch point is already determined, the problem in~\eqref{eqn:OPT_robust_feasibility} can be solved with an additional constraint $u=u^{(0)}$ where $u^{(0)}$ is the determined dispatch point.

\subsection{Robust AC OPF Algorithm}
\label{subsec:robust_acopf_algorithm}
We next consider a setting where the system operator wants to solve the robust AC~OPF problem with a robustness margin of at least $\gamma_\textrm{req}$.
Robust optimization is a ``worst-case'' approach where the constraints need to be satisfied for all uncertainty realizations and the objective function minimizes the maximum cost among all uncertainty realizations:

\begin{equation}
    \begin{aligned}
        \underset{u,\,z^u,\,z^\ell,\,g_\mathcal{P}^u,\,g_\mathcal{P}^\ell}{\text{minimize}} \hskip 1em & \sum_{i=1}^{n_{g}} c_i (p_{\textrm{g,ref},i}+\alpha_i\Delta^u) \\
        \text{subject to} \hskip 1em & \text{\eqref{eqn:g_P},\;\eqref{eqn:conv_restr_eq}--\eqref{eqn:conv_restr_ineq2}} \; \text{and}\; \gamma=\gamma_\textrm{req}.
    \end{aligned}
    \label{eqn:robustOPF}
\end{equation}
Conveniently, the over-estimator of the distributed slack imbalance is given as an explicit optimization variable $\Delta^u$.
Replacing the constraints~\eqref{eqn:cost}--\eqref{eqn:OPconstr} with the robust feasibility condition~\eqref{eqn:conv_restr_eq}--\eqref{eqn:conv_restr_ineq2} from Theorem~\ref{thm:thm2} yields a tractable convex optimization problem whose solution is certifiably robustly feasible.
The conditions in \eqref{eqn:conv_restr_eq}--\eqref{eqn:conv_restr_ineq2} are constructed around a nominal dispatch point denoted as $u^{(k)}$.
The dependence on $u^{(k)}$ appears in~\eqref{eqn:J_eval}, where the Jacobian $J_{f,0}$ is evaluated at the given nominal point. 

Due to this dependence on the nominal operating point, we use an iterative solution approach.
We start by solving the nominal AC~OPF problem with any existing algorithm~\cite{ferc4} to determine the initial dispatch point $u^{(0)}$, which is not necessarily robustly feasible.
We improve this dispatch point via a \emph{sequential convex restriction} algorithm that iterates between (i) constructing~\eqref{eqn:robustOPF} by updating the convex restriction based on the dispatch point $u^{(k)}$ and (ii) solving \eqref{eqn:robustOPF} to obtain a new dispatch point $u^{(k+1)}$. 
The following steps describe our robust AC~OPF solution algorithm for some termination constants $\varepsilon_\textrm{nom}$ and $\varepsilon_\textrm{worst}$:

\noindent\textit{Initialization}: Solve the nominal AC~OPF problem~\eqref{eqn:cost}--\eqref{eqn:OPconstr} (i.e., $\mathcal{W}(0) = \left\lbrace w^{(0)}\right\rbrace$). Initialize $u^{(0)}$ and $x^{(0)}$ to the resulting solution. Set $\gamma=\gamma_\textrm{req}$ and $k=0$. The initial nominal cost $c^{(0)}$ and the worst-case operating cost $c^{u,(0)}$ are set to $\infty$ since the initial solution is generally not robustly feasible.

\begin{enumerate}[leftmargin=4em,labelindent=16pt,label=\bfseries Step \arabic*:]
  \item Set $x^{(k)}$ as the nominal operating point and evaluate Jacobians~\eqref{eqn:J_eval} to update the convex restriction. 
  \item Solve the robust AC OPF problem~\eqref{eqn:robustOPF} and update $u^{(k+1)}$ to the associated dispatch solution.
  \item Solve the AC power flow equations (e.g., using Newton's method) for the dispatch point $u^{(k+1)}$ with the nominal uncertain variable $w^{(0)}$ in order to obtain the corresponding nominal internal states $x^{(k+1)}$.
  \item If $c^{(k)}-c^{(k+1)}<\varepsilon_\textrm{nom}$ or $c^{u,(k)}-c^{u,(k+1)}<\varepsilon_\textrm{worst}$, return the dispatch point $u^{(k)}$ and terminate the algorithm. Otherwise, repeat from Step 1 with $k=k+1$.
\end{enumerate}

The dispatch point resulting from the proposed algorithm is guaranteed to be robustly feasible. By setting the initial condition as the solution to the nominal OPF problem, the output of the algorithm is often close to the nominal solution, i.e., the nominal solution provides our algorithm with a good initialization.
Step 4 requires the decrements of both the worst-case generation cost and the nominal generation cost to be less than the termination criteria. This requirement ensures that both the nominal and worst-case costs decrease at every iteration in the algorithm.

\subsection{Potential Infeasibility and Optimality Gap}
Any operating point output by the algorithm is guaranteed to be robustly feasible. However, the algorithm may not yield any output in some cases, and the solution is not guaranteed to be optimal.
To further explain various scenarios, we note:

\begin{enumerate}
    \item Problem \eqref{eqn:robustOPF} can be infeasible if the original problem does not admit a robustly feasible solution due to strict operational limits. A trivial example is a nominal OPF problem with no feasible solution.
    \item Problem \eqref{eqn:robustOPF} may be infeasible due to the conservativeness of the robust convex restriction. Since the robust convex restriction is an inner approximation, it may only cover a subset of the robustly feasible points.
    \item The algorithm does not guarantee the optimality of the solution. In other words, there may exist lower cost dispatch points that are robustly feasible. 
\end{enumerate}

The \emph{optimality gap} refers to the difference between the worst-case cost of our solution and the worst-case cost associated with the global optimum of the robust AC~OPF problem. We will use the optimality gap to quantify the conservatism associated with the robustness constraint and the robust convex restriction.
The following proposition provides a way to bound the optimality gap of our solution by comparing it to the nominal (non-robust) solution.

\begin{figure*}[!htbp]
    \centering
    \includegraphics[width=7.0in]{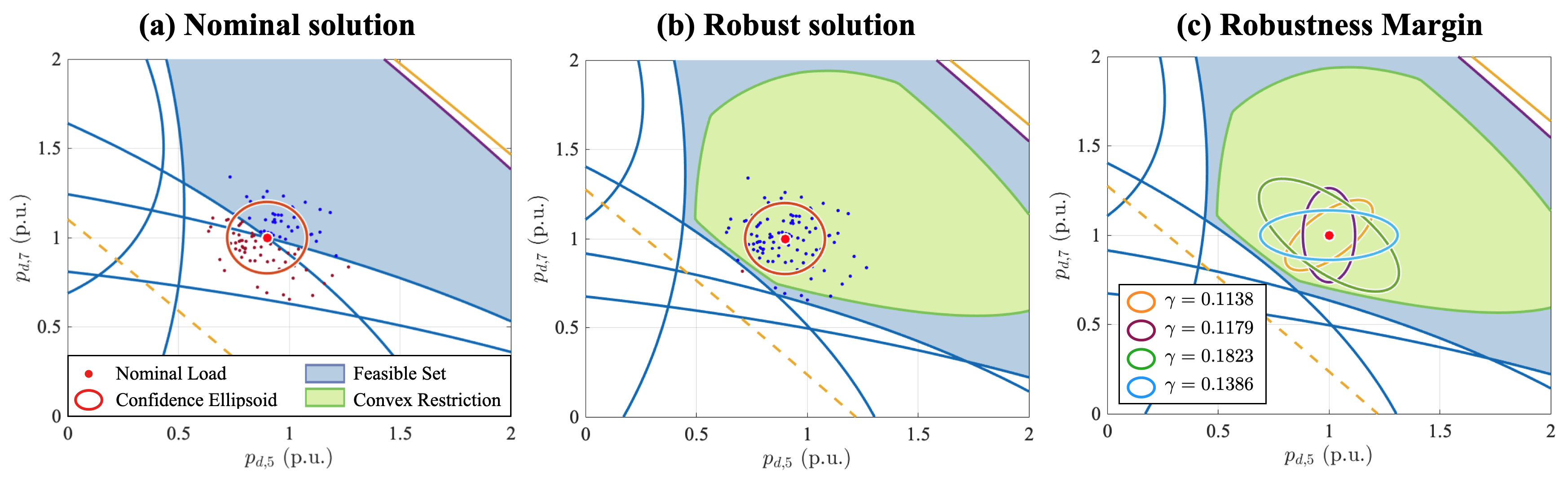}
    \caption{Fig.~\ref{fig:9bus}(a) shows the nominal (non-robust) AC~OPF solution for the 9-bus system from~\cite{chow1982time} with the feasible region (in blue) defined by uncertain load variation at buses~5 and~7.  Scattered dots show 100 load samples drawn from Gaussian distribution where blue and brown dots represent safe and unsafe realizations, respectively.
    Fig.~\ref{fig:9bus}(b) shows the robust AC~OPF solution obtained using the algorithm in Section~\ref{subsec:robust_acopf_algorithm} with a robustness margin equal to 20\% of the nominal load. The feasible space is shown in blue and the convex restriction is shown in green. Sampled uncertainty realizations that satisfy the operational constraints are shown by the blue points. Samples resulting in constraint violations are plotted in brown. Observe that all samples within the considered uncertainty set (red circle) are feasible. Fig.~\ref{fig:9bus}(c) shows the robustness margins $\gamma$ obtained by solving~\eqref{eqn:OPT_robust_feasibility} for various correlation matrices~$\Sigma$, with the dispatch point $u=u^{(0)}$ fixed to the solution in Fig.~\ref{fig:9bus}(b).}
    \label{fig:9bus}
    \vspace*{-0.5em}
\end{figure*}

\begin{proposition}
Suppose that $p_\textrm{g}^{(k)}$ is the active power dispatch obtained by solving problem \eqref{eqn:robustOPF} at iteration $k$.
Then, the optimality gap is bounded by
\begin{equation}
    \frac{c^u(p_\textrm{g}^{(k)}) - c^{u}(p_\textrm{g}^\textrm{robust})}{c^{u}(p_\textrm{g}^\textrm{robust})} \leq \frac{c^u(p_\textrm{g}^{(k)}) - c^{(0)}(p_\textrm{g}^\textrm{nom})}{c^{(0)}(p_\textrm{g}^\textrm{nom})},
\label{eqn:opt_gap}\end{equation}
where $p_\textrm{g}^\textrm{robust}$ and $p_\textrm{g}^\textrm{nom}$ refer to the active power dispatches for the robust OPF and nominal OPF solutions, respectively. The functions $c^u(p_\textrm{g}) = \max_{w\in\mathcal{W}(\gamma)}c(p_\textrm{g})$ and $c^{(0)}(p_\textrm{g}) = c(p_\textrm{g})\mid_{w=w^{(0)}}$ are the worst-case and nominal cost functions, respectively.
\label{prop:opt_gap}
\end{proposition}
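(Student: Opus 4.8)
The plan is to establish the bound in \eqref{eqn:opt_gap} by a chain of three elementary inequalities, relying on two structural facts: first, that the nominal cost lower-bounds the worst-case cost for any fixed dispatch (since $c^{(0)}(p_\textrm{g}) = c(p_\textrm{g})\mid_{w=w^{(0)}}$ is one particular realization inside the maximization defining $c^u$), and second, that the nominal OPF solution $p_\textrm{g}^\textrm{nom}$ is the \emph{minimizer} of the nominal cost over a feasible set that contains every robustly feasible dispatch (robust feasibility implies nominal feasibility, since $w^{(0)}\in\mathcal{W}(\gamma)$). First I would observe $c^{(0)}(p_\textrm{g}^\textrm{nom}) \leq c^{(0)}(p_\textrm{g}^\textrm{robust}) \leq c^{u}(p_\textrm{g}^\textrm{robust})$, giving a lower bound on the denominator of the left-hand side.

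Next I would manipulate the left-hand ratio directly. Write $c^u(p_\textrm{g}^{(k)}) - c^{u}(p_\textrm{g}^\textrm{robust})$ and add and subtract $c^{(0)}(p_\textrm{g}^\textrm{nom})$, or more simply bound the numerator from above by $c^u(p_\textrm{g}^{(k)}) - c^{(0)}(p_\textrm{g}^\textrm{nom})$ using $c^{(0)}(p_\textrm{g}^\textrm{nom}) \leq c^{u}(p_\textrm{g}^\textrm{robust})$. Then both the numerator and (via the inequality from the previous paragraph) the denominator of the left-hand side can be replaced by their counterparts involving $c^{(0)}(p_\textrm{g}^\textrm{nom})$, yielding
\begin{equation}
\frac{c^u(p_\textrm{g}^{(k)}) - c^{u}(p_\textrm{g}^\textrm{robust})}{c^{u}(p_\textrm{g}^\textrm{robust})} \leq \frac{c^u(p_\textrm{g}^{(k)}) - c^{(0)}(p_\textrm{g}^\textrm{nom})}{c^{(0)}(p_\textrm{g}^\textrm{nom})},
\end{equation}
which is exactly \eqref{eqn:opt_gap}. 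The step where the denominator is shrunk (replacing $c^{u}(p_\textrm{g}^\textrm{robust})$ by the smaller quantity $c^{(0)}(p_\textrm{g}^\textrm{nom})$) is valid only if the numerator of the right-hand side is nonnegative — i.e., $c^u(p_\textrm{g}^{(k)}) \geq c^{(0)}(p_\textrm{g}^\textrm{nom})$ — which again follows from $p_\textrm{g}^\textrm{nom}$ being the nominal minimizer together with $c^{(0)}\leq c^u$ for the dispatch $p_\textrm{g}^{(k)}$ (since $p_\textrm{g}^{(k)}$ is robustly feasible, hence nominally feasible). I would also note that all these costs are positive, so the divisions are legitimate.

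The main obstacle is not any computation but making the feasibility-containment argument airtight: I must argue carefully that the dispatch returned at iteration $k$ is robustly feasible (so that it is admissible in the robust problem and its nominal restriction is feasible for the nominal OPF), and that the robust OPF optimum $p_\textrm{g}^\textrm{robust}$ likewise lies in the nominal feasible set. Both hinge on the fact — used implicitly throughout Section~\ref{sec:algorithm} — that $w^{(0)} \in \mathcal{W}(\gamma)$ for every $\gamma \geq 0$, so Definition~\ref{def:robust} forces the nominal power-flow equations and operational constraints to hold at the nominal realization. Once that containment is stated, the three inequalities are immediate and the proof is a few lines. A minor subtlety worth a sentence is that the worst-case cost $c^u$ is evaluated with respect to the \emph{fixed} uncertainty set $\mathcal{W}(\gamma_\textrm{req})$ in both numerator terms, so the comparison $c^u(p_\textrm{g}^{(k)}) - c^u(p_\textrm{g}^\textrm{robust}) \geq 0$ (needed for the left ratio to be a genuine nonnegative optimality gap) holds because $p_\textrm{g}^\textrm{robust}$ is by definition the minimizer of $c^u$ over the robustly feasible set and $p_\textrm{g}^{(k)}$ belongs to that set.
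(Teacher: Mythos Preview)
Your proposal is correct and follows essentially the same approach as the paper: establish the chain $c^{(0)}(p_\textrm{g}^\textrm{nom}) \leq c^{(0)}(p_\textrm{g}^\textrm{robust}) \leq c^{u}(p_\textrm{g}^\textrm{robust})$ and then rearrange. You are in fact more careful than the paper about the sign conditions (nonnegativity of the numerator, positivity of the costs) and the feasibility-containment step, all of which the paper leaves implicit.
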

\begin{proof}
Since $p_\textrm{g}^\textrm{nom}\in\argmin c^{(0)}(p_\textrm{g})$ subject to the OPF constraints~\eqref{eqn:ACPowerflow}--\eqref{eqn:OPconstr} and $c^{(0)}(p_\textrm{g})\leq c^u(p_\textrm{g})$ for all $p_\textrm{g}$,
\begin{displaymath}
     c^{(0)}(p_\textrm{g}^\textrm{nom})\leq c^{(0)}(p_\textrm{g}^\textrm{robust})\leq c^u(p_\textrm{g}^\textrm{robust}).
\end{displaymath}
Then, $c^{(0)}(p_\textrm{g}^\textrm{nom})\leq c^u(p_\textrm{g}^\textrm{robust})$, and \eqref{eqn:opt_gap} holds by rearranging the equations.
\end{proof}

The optimality gap is hard to determine precisely because we do not know the exact optimal robust solution, $p_\textrm{g}^\textrm{robust}$.
Proposition~\ref{prop:opt_gap} provides a way to numerically compute an upper bound on the optimality gap since the nominal optimal cost $c^{(0)}(p_\textrm{g}^\textrm{nom})$ is obtained from the nominal OPF solution.
The empirical studies in the next section show that the upper bound on the optimality gap is small ($\lesssim 1\%$), indicating that our robustly feasible dispatch points are at least close to optimal.


\section{Numerical Studies}
\label{sec:experiments}
This section provides numerical studies to demtonstrate our algorithms. These numerical studies were performed on a computer with a 3.3~GHz Intel Core~i7 processor and 16~GB of RAM. Our implementations used JuMP/Julia~\cite{jump}. The convex quadratic problems were solved with MOSEK.
The initial OPF problems were solved using PowerModels.jl~\cite{powermodels} and IPOPT~\cite{wachter2006implementation}, and the AC power flow equations with distributed slack were solved using a customized implementation of Newton's method.
The numerical studies obtain dispatch points using our robust AC OPF solution, which guarantees robust feasibility. The implementation is available at \url{https://github.com/dclee131/PowerFlowCVXRS}.

We examine our dispatch solution in a stochastic setting where the uncertainty is drawn from Gaussian distributions. While the performance evaluation could use any probability distribution, the Gaussian distribution was chosen to match our assumption that the source of uncertainty is a short-term prediction and forecast error.

\subsection{Illustrative Example using a 9-Bus System}
We begin by considering the 9-bus system from~\cite{chow1982time} with uncertain loads at buses~5 and~7.
This 9-bus system is operating in a normal condition with positive active and reactive power load demands.
The participation factors are set to~1 for the generator at bus~1 and~0 for the other generators, which corresponds to the single slack bus formulation.

\subsubsection{Comparison of nominal and robust solution}
We first consider normally distributed loads with mean equal to the nominal load~$w^{(0)}$ and standard deviation equal to 10\% of~$w^{(0)}$ without correlation between loads.
For our robust AC~OPF algorithm, we use an uncertainty set $\mathcal{W}(\gamma_\textrm{req})$ that encloses two standard deviations of the considered load uncertainty by setting $\Sigma=\mathbf{diag}(p_\textrm{d}^2)$ and $\gamma_\textrm{req}=0.2$. 
To assess the quality of our solutions from the probabilistic aspect, we compute the probability of constraint violations using 10,000 samples of the random loads. 

Fig.~\ref{fig:9bus} shows the uncertainty set and the convex restriction for the loads at buses 5 and 7. The nominal loads and the confidence ellipsoid are intrinsic to the uncertainty model and do not change with the dispatch point.
Changes in the dispatch point adjust the set of feasible demands, and thus the feasible sets shown in Figs.~\ref{fig:9bus}(a) and (b) are different while the nominal load stays at the same location.
Specifically, the robust dispatch solution changes the feasible set of loading conditions such that it inscribes the confidence ellipsoid.
The quality of our solution is evident by examining the distance between the boundaries of the feasible region, convex restriction, and uncertainty set.
For the nominal (non-robust) OPF solution in Fig.~\ref{fig:9bus}(a), the operational cost is 5296.69~\$/hr and constraint violations occur in 55.18\% of the samples.
The most common violations are the maximum voltage magnitude limits at buses~6 and~8. 
For the robust OPF solution in Fig.~\ref{fig:9bus}(b), we observe that all uncertainty realizations within the considered uncertainty set (red circle) are feasible, as guaranteed by our algorithm. The generation cost for the robust dispatch point (5342.99~\$/hr) is 0.87\% greater than the cost for the nominal dispatch point, but only 0.13\% of the samples result in the constraint violations. 

\subsubsection{Robustness Margin Maximization} We next examine a scenario where the system operators want to determine the operating point with the largest robustness margin. First, we let the dispatch point $u$ be a decision variable and solve the problem in \eqref{eqn:OPT_robust_feasibility} that maximizes the robustness margin. The resulting robustness margin $\gamma$ was 0.380, which is 90\% greater than what was required ($\gamma_\textrm{req}=0.2$) in the robust OPF solution above.
This achieves robustness against $\pm38$\% fluctuations in loads at every bus while the generation cost (5383.96 \$/hr) only increased by 0.77\% relative to the robust OPF solution.

Second, we examine a scenario where the dispatch point is given by $u=u^{(0)}$ and introduce correlation between loads. We define the dispatch point $u^{(0)}$ as the solution in Fig.~\ref{fig:9bus}(b), and compute the robustness margin by solving~\eqref{eqn:OPT_robust_feasibility} with the additional constraint $u = u^{(0)}$. We solve this problem several times for different correlation matrices $\Sigma$.
The legend of Fig.~\ref{fig:9bus}(c) gives the robustness margins $\gamma$ obtained for different correlation matrices $\Sigma$, with the corresponding uncertainty sets shown by the ellipses.

\subsubsection{Computation Time}
Since the robustness condition is convex, these problems can be solved efficiently.
For the 9-bus system, the robust AC~OPF algorithm converged in two iterations, each taking an average of 0.0497~seconds to compute. 
The number of iterations indicates how many times the convex optimization problem in \eqref{eqn:robustOPF} was solved.
The average computation time for obtaining the robustness margins is 0.0585~seconds.

\begin{figure}[!htbp]
	\centering
	\includegraphics[width=3.2in]{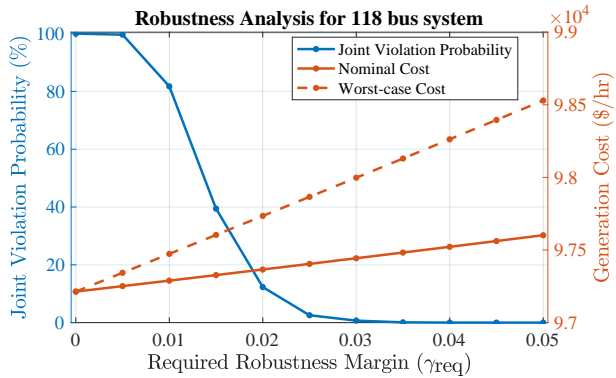}
	\caption{The trade-off between generation cost and robustness for the IEEE~118-bus system. 
	The x-axis shows the required robustness margin,~$\gamma_\textrm{req}$. 
    The solid and dashed red lines show the nominal and worst-case generation costs, $c^{(0)}$ and $c^u$, respectively.
    The blue lines show the joint probability of constraint violations with stochastic uncertainty.}
	\label{fig:violation}
\end{figure}

\subsection{Robustness vs. Cost Trade-Off for the IEEE~118-Bus System}
We next study the trade-off between operating cost and the probability of constraint violations by solving the robust AC~OPF problem for the IEEE 118-bus system~\cite{pglib}.
Load uncertainty is modeled via a Gaussian distribution with variance equal to $1\%$ of the nominal load.
The results are plotted in Fig.~\ref{fig:violation} with the enforced robustness margin $\gamma_\textrm{req}$ on the x-axis. The left y-axis (in blue) shows the empirically determined join violation probability, i.e., the probability that a sample from the considered uncertainty distribution violates one or more constraints. The right y-axis (in orange) shows both the nominal ($c^{(0)}$) and worst-case ($c^u$) generation costs.

We observe that increasing the robustness margin results in lower violation probability and higher generation cost. 
By examining this trade-off, operators can balance generation cost and robustness based on the assessed level of power injection uncertainty.
For example, we can avoid constraint violations with 99\% probability against random samples from the Gaussian distribution by setting $\gamma_\textrm{req}=0.03$, which increases the cost relative to the nominal solution by 0.24\%.

\begin{table*}[!htbp]
\centering
\caption{Violation Probability and Cost Comparison for Selected PGLib Test Cases}
\begin{tabular}{ |c||c|c|c|c|c|c|c|c|c|c|}
\hline
& \multicolumn{3}{c|}{Nominal OPF Solution}  &  \multicolumn{5}{c|}{Robust OPF Solution} & \multicolumn{2}{c|}{Optimality Gap} \\
& & \multicolumn{2}{c|}{Violation Probability (\%)}  & &  \multicolumn{2}{c|}{Violation Probability (\%)} & Solver Time & Num. of & \multicolumn{2}{c|}{(\%)} \\
Test Case & Cost (\$/h) & (Uniform) &  (Gaussian)  & Cost (\$/h) & (Uniform) &   (Gaussian) & per iter (sec) & iterations & Local & QC\! Relax \\ \hline \hline
\multicolumn{11}{|c|}{Typical Operating Conditions (TYP)} \\ \hline
case3\_lmbd & \hphantom{00}5812.64 & \hphantom{0}49.88 & \hphantom{0}49.47 & \hphantom{00}5829.93 & 0 & \hphantom{0}0.20 & 0.04& 4 & 0.30 & \hphantom{0}1.54 \\ \hline
case5\_pjm & \hphantom{0}17551.89 & 100.00 & 100.00 & \hphantom{0}17631.82 & 0 & \hphantom{0}5.18 & 0.07& 3 & 0.46 & 17.54 \\ \hline
case14\_ieee & \hphantom{00}2178.08 & \hphantom{0}91.30 & \hphantom{0}91.44 & \hphantom{00}2180.96 & 0 & \hphantom{0}2.10 & 0.12& 2 & 0.13 & \hphantom{0}0.54 \\ \hline
case24\_ieee\_rts & \hphantom{0}63352.20 & \hphantom{0}99.40 & \hphantom{0}99.66 & \hphantom{0}63566.97 & 0 & \hphantom{0}5.62 & 0.71& 3 & 0.34 & \hphantom{0}0.40 \\ \hline
case30\_as & \hphantom{000}803.13 & \hphantom{0}49.07 & \hphantom{0}50.10 & \hphantom{000}803.13 & 0 & \hphantom{0}1.94 & 0.26& 2 & 0.00 & \hphantom{0}0.06 \\ \hline
case30\_fsr & \hphantom{000}575.77 & \hphantom{0}69.12 & \hphantom{0}71.99 & \hphantom{000}580.01 & 0 & \hphantom{0}0.03 & 0.38& 2 & 0.74 & \hphantom{0}1.13 \\ \hline
case30\_ieee & \hphantom{00}8208.52 & \hphantom{0}68.74 & \hphantom{0}80.84 & \hphantom{00}8232.81 & 0 & \hphantom{0}2.70 & 0.38& 3 & 0.30 & 23.17 \\ \hline
case39\_epri & 138415.56 & \hphantom{0}99.96 & \hphantom{0}99.94 & 138643.93 & 0 & \hphantom{0}5.60 & 0.51& 4 & 0.16 & \hphantom{0}0.71 \\ \hline
case57\_ieee & \hphantom{0}37589.34 & \hphantom{0}96.91 & \hphantom{0}98.04 & \hphantom{0}37602.58 & 0 & \hphantom{0}5.34 & 0.95& 3 & 0.04 & \hphantom{0}0.22 \\ \hline
case73\_ieee\_rts & 189764.08 & 100.00 & 100.00 & 190139.37 & 0 & \hphantom{0}9.86 & 1.74& 3 & 0.20 & \hphantom{0}0.26 \\ \hline
case118\_ieee & \hphantom{0}97213.61 & 100.00 & 100.00 & \hphantom{0}97261.57 & 0 & 12.89 & 6.32& 3 & 0.05 & \hphantom{0}0.86 \\ \hline

\end{tabular}
\label{tab:result}
\vspace*{-1em}
\end{table*}

\subsection{Numerical Studies using the PGLib Test Cases}
Finally, we show the effectiveness of our robust AC~OPF algorithm using the PGLib test cases~\cite{pglib} with sizes up to 179 buses.
Each generator's participation factor is proportional to the generator's capacity, i.e., $\alpha_i=(p_{\textrm{g},i}^\textrm{max}-p_{\textrm{g},i}^\textrm{min})/\sum_{i=1}^{n_g} (p_{\textrm{g},i}^\textrm{max}-p_{\textrm{g},i}^\textrm{min})$.
The uncertainty set for the robust OPF problem was modeled by 1\% demand fluctuations at every load bus (i.e., $\Sigma=\mathbf{diag}(p_\textrm{d}^2)$ and $\gamma_\textrm{req}=0.01$).
Table~\ref{tab:result} compares the generation cost of the nominal (non-robust) solution obtained using PowerModels.jl~\cite{powermodels} with our robust dispatch point in the second and fifth columns, respectively. The bound on the optimality gap in Proposition \ref{prop:opt_gap} is shown in the tenth column, which is computed by taking the difference in generation costs between the nominal and robust OPF solution.
The optimality gap is approximately less than 1\%, indicating that only a marginal trade-off in generation cost is necessary to achieve this level of robustness. 
The robustness against stochastic uncertainty was evaluated by checking the feasibility of 10,000 samples from Gaussian and uniform distributions. Columns 3 and 6 in Table~\ref{tab:result} used samples from a uniform distribution within the uncertainty set. Since our robust OPF solution guarantees robust feasibility, there was no violated case for uniform distribution. Columns 4 and 7 used a Gaussian distribution with its mean set to the nominal loads and variance set to 0.5\% of the nominal loads. The results show that while the nominal OPF solution is very sensitive to the fluctuation, the robust solution makes the grid significantly more robust against stochastic uncertainty.
Columns 8 and 9 show the solver time and the number of iterations, respectively. All of the studied test cases required fewer than five iterations. Columns~10 and~11 show bounds on the optimality gaps computed by the right-hand side of~\eqref{eqn:opt_gap} in Proposition~\ref{prop:opt_gap}. The optimality gap in column 10 is calculated using the generation cost from a local search method (IPOPT~\cite{wachter2006implementation}), and column 11 uses the lower bound on the generation cost from the quadratic convex (QC) relaxation~\cite{coffrin2015qc}. We note that the optimality gap from the QC relaxation in column 11 includes both the relaxation gap and the gap from the robust convex restriction.

\section{Conclusion}
\label{sec:conclusion}

We presented a robust convex restriction condition and an optimization algorithm to solve robust AC optimal power flow problems. Our approach addresses inherent uncertainties in power injections due to the growing quantities of renewable generation. The main advantage of our approach is its theoretical guarantees with respect to both power flow solvability and operational constraint satisfaction for robust AC~OPF problems. Our numerical results demonstrate the proposed algorithm on a range of test cases, enabling studies of the trade-off between cost and robustness.

There are several open problems and extensions for future work. One direction is expanding the types of uncertainties considered in the formulation, such as impedance parameters and transmission line statuses. This includes \mbox{N-1} contingencies related to the failure of a single transmission line or generator. The \mbox{N-1} security criterion can be framed as a robust OPF problem with the line and generator statuses as uncertain variables. To handle this important type of uncertainties, future work will investigate methods for handling robustness constraints with discrete variables.

The proposed method considers a given uncertainty set in the form of a confidence ellipsoid. Other future work involves investigating the choice of the uncertainty set that best represents the underlying system. The required robustness margin, $\gamma_\textrm{req}$, needs to be chosen appropriately to mitigate the economic optimality and risk associated with the uncertainty. While we evaluate its probabilistic performance with numerical simulations, further investigation is needed to establish the relationship between the robustness margin and the probability of violating operational constraints.

Other future work involves extending the system model to consider additional control variables such as transformer tap ratios and phase shifts as well as generator participation factors. The power grid model can also be extended to consider three-phase networks. 
While the proposed method provides a general procedure, extensions to new models need to investigate appropriate nonlinear representations specific to the particular system description. 
The choice of nonlinear representation with a sparse coupling structure can play a critical role in the performance of robust convex restriction. Extended models will need to identify and exploit the system's structure to improve performance and scalability.

\appendix

\subsection{AC~OPF Constraints with Basis Functions} \label{appendix:ineq}
The reactive power injections at PV buses are defined as
\begin{equation}
C_\textrm{g,pv}\,q_\textrm{g}-C_\textrm{d,pv}\,q_\textrm{d}=\underbrace{\begin{bmatrix}
\mathbf{0} & -B^\textrm{c}_\textrm{pv} & G^\textrm{s}_\textrm{pv} & -B^\mathrm{d}_\textrm{pv}
\end{bmatrix}}_{L_\textrm{q}}\psi(z,u).
\label{eqn:pf_ineqn}
\end{equation}
Similarly, the transmission line flows are defined as
\begin{equation}
\begin{bmatrix} s_\textrm{p}^\textrm{f} \\ s_\textrm{q}^\textrm{f} \\ s_\textrm{p}^\textrm{t} \\ s_\textrm{q}^\textrm{t} \end{bmatrix}
=\underbrace{\begin{bmatrix}
\mathbf{0} & G_\mathrm{ft} & -B_\mathrm{ft} & G_\mathrm{ff}E_\mathrm{f}^T \\
\mathbf{0} & -B_\mathrm{ft} & -G_\mathrm{ft} & -B_\mathrm{ff}E_\mathrm{f}^T \\
\mathbf{0} & G_\mathrm{tf} & -B_\mathrm{tf} & G_\mathrm{tt}E_\mathrm{t}^T \\
\mathbf{0} & -B_\mathrm{tf} & -G_\mathrm{tf} & -B_\mathrm{tt}E_\mathrm{t}^T
\end{bmatrix}}_{[L_\textrm{p,line}^\textrm{f};\;L_\textrm{q,line}^\textrm{f};\;L_\textrm{p,line}^\textrm{t};\;L_\textrm{q,line}^\textrm{t}]} \psi(z,u).
\label{eqn:lineflow_t} \end{equation}
The submatrices are defined such that $s_\textrm{j}^\textrm{k}=L_\textrm{j,line}^\textrm{k}\psi(z,u)$ for $\textrm{k}\in\{\textrm{f},\textrm{t}\}$ and $\textrm{j}\in\{\textrm{p},\textrm{q}\}$.

\subsection{Proof of Lemma \ref{lemma:g_bound}}\label{appendix:g_bound}
We present a short proof the case for $g^{\mathrm{v},u}(v_k)$. Given that $x\in\mathcal{P}(z^u,z^\ell)$, the voltage magnitude at bus $k$ can be represented as $v_k=\alpha_i v_k^\ell + (1-\alpha_i)v_k^u$ where $\alpha\in[0,1]$. Since the function $g_k^u(x,u)$ is convex, 
\begin{equation}\begin{aligned}
g_l^{\mathrm{v},u}(v_k) &\leq \alpha g_l^{\mathrm{v},u}(v_k^\ell)+(1-\alpha) g_l^{\mathrm{v},u}(v_k^u) \\
& \leq \max{\{g_l^{\mathrm{v},u}(v_k^\ell), g_l^{\mathrm{v},u}(v_k^u)\}},
\end{aligned}\end{equation}
for all $v_k\in[v_k^\ell,v_k^u]$.
From the condition in Lemma \ref{lemma:g_bound}, $g_l^{\mathrm{v},u}(v_k^\ell) \leq g_{\mathcal{P},k}^{\mathrm{v},u}$ and $g_l^{\mathrm{v},u}(v_k^u) \leq g_{\mathcal{P},k}^{\mathrm{v},u}$. Therefore, $g_l^{\mathrm{v},u}(v_k)\leq g_l^{\mathrm{v},u}(v_k^\ell)$ for all $v_k\in[v_k^\ell,v_k^u]$. Proof for general case where the function $g_k(x,u)$ takes multiple arguments (e.g., $g_l^{\mathrm{cos},\ell}(v_l^\textrm{f},v_l^\textrm{t},\varphi_l)$) is available in~\cite{lee2018}.

\subsection{Proof of Lemma \ref{lemma:max_interval}}\label{appendix:max_interval}
Since $x_i\in[x_i^\ell,\,x_i^u]$ for all $i$, $c^Tx \leq (c^+)^T x^u + (c^-)^T x^\ell$. Define $x^*$ such that $x^*_i = x^u_i$ if $c_i\geq0$, and $x^*_i = x^\ell_i$ if $c_i<0$ for $i=1,\ldots,n$. Then, $c^Tx^* = (c^+)^T x^u + (c^-)^T x^\ell$ achieves the upper bound, and hence $x^*$ is the optimal solution.

\subsection{Proof of Lemma \ref{lemma:max_ellipsoid}}\label{appendix:max_ellipsoid}

The optimization problem can be written as $\max\{c^Tw\mid (w-w^{(0)})^T\Sigma^{-1}(w-w^{(0)})\leq\gamma)\}=\max\{c^T(w^{(0)}+\Sigma^{1/2}\tilde{w})\mid \lVert\tilde{w}\rVert_2\leq\gamma\}$ by a change of variables with $w=w^{(0)}+\Sigma^{1/2}\tilde{w}$. Since $\max\{c^T\hat{w}\mid \lVert\hat{w}\rVert_2\leq\gamma\} = \gamma\lVert c\rVert_2$, the optimal objective value is $c^Tw^{(0)}+\gamma\lVert c^T\Sigma^{1/2}\rVert_2$.

\subsection{Proof of Theorem \ref{thm_robustness}} \label{appendix:proof}
The condition in inequality \eqref{eqn:conv_restr_eq} ensures that, for $i=1,\ldots,n_\textrm{l}+n_\textrm{pq}+1$,
\begin{displaymath}
\begin{aligned}
     &\max_{w\in\mathcal{W}} \max_{x\in\mathcal{P}} \left\{K_ig(x,u)-A_iJ_{f,0}^{-1}Rw\right\} \\
     &\leq\max_{x\in\mathcal{P}} \left\{K_i^+g^u(x,u)+K_i^-g^\ell(x,u)\right\}-\min_{w\in\mathcal{W}}A_iJ_{f,0}^{-1}Rw \\
     &\leq K_i^+\max_{x\in\mathcal{P}}g^u(x,u)+K_i^-\min_{x\in\mathcal{P}}g^\ell(x,u)+\xi^u(\gamma) \\
     &=K_i^+g^u_\mathcal{P}+K_i^-g^\ell_\mathcal{P}+\xi(\gamma)\leq z_i^u.
\end{aligned}
\end{displaymath}
Similarly, $\min_{w\in\mathcal{W}} \min_{x\in\mathcal{P}} K_ig(x,u)-A_iJ_{f,0}^{-1}Rw\geq z_i^\ell$. These constraints ensure satisfaction of the inequalities in \eqref{eqn:self_mapping_P}.
Then the nonlinear map $G_{u,w}:x\rightarrow-J_{f,(0)}^{-1}Mg(x,u)-J_{f,(0)}^{-1}Rw$ in \eqref{eq:fixed-point-form} is self-mapping with the set $\mathcal{P}(z^u,\,z^\ell)$ for any realizations of $w\in\mathcal{W}(\gamma)$ (i.e., $\forall w\in\mathcal{W}(\gamma)$ and $\forall x\in\mathcal{P}(z^u,\,z^\ell)$, $G_{u,w}(x)\in\mathcal{P}(z^u,\,z^\ell)$).
By Brouwer's fixed-point theorem, there exists a fixed-point $x\in\mathcal{P}(z^u,\,z^\ell)$, which is equivalent to satisfying the AC power flow equations in~\eqref{eqn:pf_eqn}.

\subsection{Proof of Theorem \ref{thm:thm2}} \label{appendix:proof2}
From Theorem \ref{thm_robustness}, the conditions in \eqref{eqn:g_P} and \eqref{eqn:conv_restr_eq} ensure that there is an internal state in the set $\mathcal{P}(z^u,\,z^\ell)$ for all realizations of the uncertainty set $\mathcal{W}(\gamma)$.
The inequalities in~\eqref{thm:operational_constr}--\eqref{eqn:conv_restr_ineq2} ensure that $x$ satisfies the operational constraints in~\eqref{eqn:OPconstr} for all $x\in\mathcal{P}(z^u,\,z^\ell)$. The inequalities in \eqref{eqn:conv_restr_ineq1} ensure that
\begin{displaymath} \begin{aligned}
     &\max_{x\in\mathcal{P}} L_{\textrm{q},i}\psi(x,u)\leq L_{\textrm{q},i}^+\max_{x\in\mathcal{P}} \psi^u(x,u) + L_{\textrm{q},i}^-\min_{x\in\mathcal{P}} \psi^\ell(x,u) \\
     & \leq L_{\textrm{q},i}^+\psi^u_\mathcal{P}+L_{\textrm{q},i}^-\psi^\ell_\mathcal{P} \leq C_{\textrm{g,pv},i}\, q_g^u - \zeta^u_i(\gamma) \\
     &\leq C_{\textrm{g,pv},i}\, q_g^u - \max_{w\in\mathcal{W}}C_{\textrm{d,pv},i}\Sigma^{1/2}_\textrm{q}w \\
     & \leq \min_{w\in\mathcal{W}} \left(C_{\textrm{g,pv},i}\, q_g^u - C_{\textrm{d,pv},i}\, \Sigma^{1/2}_ \textrm{q}w\right),
\end{aligned} \end{displaymath}
and thus, $L_{\textrm{q},i}\psi(x,u)\leq C_{\textrm{g,pv},i}\,q_g^u - C_{\textrm{d,pv},i}\,q_\textrm{d}$ holds for all $x\in\mathcal{P}(z^u,\,z^\ell)$ and $w\in\mathcal{W}(\gamma)$. Therefore, $q_\textrm{g}^u$ is an upper bound on the reactive power generation for all uncertainty realizations $w\in\mathcal{W}(\gamma)$.
Similarly, the inequalities in \eqref{eqn:conv_restr_ineq1} and \eqref{eqn:conv_restr_ineq2} ensure that $q_\textrm{g}^\ell$ is a valid lower bound on the reactive power generation, and $s_\textrm{p}^{\textrm{t},u}$, $s_\textrm{q}^{\textrm{f},u}$, and $s_\textrm{q}^{\textrm{t},u}$ are valid line flow bounds.
Therefore, for each uncertainty realization $w\in\mathcal{W}(\gamma)$, there exist associated internal states $x$ that satisfy the power flow equations and the operational constraints.

\ifCLASSOPTIONcaptionsoff
\newpage
\fi

\IEEEtriggeratref{19}
\bibliographystyle{IEEEtran}
\bibliography{references}

\end{document}